\numberwithin{equation}{section}
\newcommand{\diff}{\mathop{}\!\mathrm{d}}
\newtheorem{thm}{Theorem}%
\newtheorem{lem}[thm]{Lemma}%
\newtheorem{dfn}[thm]{Definition}%
\newtheorem{rem}[thm]{Remark}%
\newcommand{\divergence}{\mathop{\rm div}\nolimits}
\newcommand{\fonction}[5]{\begin{array}{l|rcl}
		#1: & #2 & \longrightarrow & #3 \\
		& #4 & \longmapsto & #5 \end{array}}
\newcommand{\norme}[1]{\left\Vert #1\right\Vert}
\newcommand{\g}{{g}}
\newcommand{\w}{{w}}
\newcommand{\Om}{{\Omega}}
\newcommand{\tf}{{t_f}}
\newcommand{\al}{{\alpha}}
\newcommand{\grav}{{\textbf{g}}}
\newcommand{\Lam}{{\Lambda}}
\newcommand{\Gam}{\Gamma}
\newcommand{\GamN}{\Gam_\Neu}
\newcommand{\GamD}{\Gam_\Dir}
\newcommand{\Dir}{{Dir}}
\newcommand{\Neu}{{N}}
\newcommand{\V}{{\textbf{V}}}
\newcommand{\n}{{\textbf{n}}}
\newcommand{\M}{{\mathfrak{M}}}
\newcommand{\Nf}{{N}}
\newcommand{\dt}{{\delta t}}
\newcommand{\E}{{\mathcal{E}}}
\newcommand{\sig}{{\sigma}}
\newcommand{\T}{{\mathcal{T}}}
\newcommand{\D}{{\mathcal{D}}}
\newcommand{\DD}{{\mathfrak{D}}}
\newcommand{\R}{{\mathbb{R}}}
\newcommand{\ddKL}{\delta_{KL}}
\newcommand{\ddKeLe}{\delta_{K^*L^*}}
\newcommand{\ddAB}{\delta_{AB}}
\newcommand{\F}{{\mathcal{F}}}
\newcommand{\HH}{{\mathcal{H}}}
\newcommand{\gam}{{\gamma}}
\newcommand{\VV}{{\mathcal{V}}}
\newcommand{\VVD}{{\VV_{\Dir}}}
\newcommand{\VVDc}{{\VV_{\Dir}^c}}
\newcommand{\A}{{\mathcal{A}}}
\newcommand{\eps}{{\epsilon}}
\newcommand{\Mobi}{{M_{{\al_i}}}}
\newcommand{\sati}{{u_{{\al_i}}}}
\newcommand{\uisup}{{\overline{\sati}}}
\newcommand{\uiinf}{{\underline{\sati}}}
\newcommand{\pointa}{(P.a)~}
\newcommand{\pointb}{(P.b)~}
\newcommand{\pointc}{(P.c)~}
\newcommand{\pointd}{(P.d)~}
\newcommand{\pointe}{(P.e)~}
\newcommand{\CtePhiSup}{{\phi_1}}
\newcommand{\Lamsup}{{\overline{\Lam}}}
\newcommand{\Laminf}{{\underline{\Lam}}}
\newcommand{\rhosup}{{\rho_1}}
\newcommand{\rhoinf}{{\rho_0}}
\newcommand{\phisup}{{\phi_1}}
\newcommand{\phiinf}{{\phi_0}}
\newcommand{\CtePointd}{{C_\eta}}
\newcommand{\CtePointe}{{C}}
\newcommand{\Z}{{\mathcal{Z}}}
\newcommand{\pc}{{p_c}}
\title{\textbf{Existence of solutions to numerical schemes using regularization: application to two-phase flow in porous media schemes}}
\author[1]{Thomas Crozon}
\affil[1]{{\small Nantes Université, Ecole Centrale de Nantes, LMJL, CNRS UMR 6629, 1 rue de la No\"e, 44321 Nantes, France.  thomas.crozon@ec-nantes.fr}}
\date{\today}
\begin{document}
\maketitle

The present document corresponds to the $4^{th}$ chapter of my thesis, the problem setting is not definitive, what matters most here are the mathematical results and the methodology of the existence proofs. 

\abstract{
In this work, we propose a framework and some tools for establishing the existence of solutions to numerical schemes in the case of the two-phase flow model. These schemes are sharing some key a priori mathematical properties. It applies to a large variety of continuous models. We propose the definition of a regularized scheme and show that if solutions exist to this regularization, then the existence of the initial one is ensured. This perturbation of the scheme facilitates the regularized existence. The main aim is to handle degenerate systems such as the two-phase Darcy flows in porous media. We illustrate the strength of our framework on two practical schemes, a finite volume one using the DDFV framework, and the other based on a Control Volume Finite Element (CVFE) method.
}

\section{Introducing the problem}\label{Section1Chap3}

In the two-phase Darcy flow, we have a coupled system of two degenerated parabolic equations \cite{BeaBac2012IntroductionTransportPorousMedia,Helmig1997MultiphaseFlow}. The study of such systems is of great interest for engineering applications such as nuclear waste management, enhanced oil and gas recovery, management of geothermal energy, etc. An accountable amount of schemes have been built in various cases for similar models, with a wide variety of numerical methods. For instance, one can refer to finite volume-type methods such as Two Point Flux Approximation \cite{SaaSaa2013StudyFullImplicitPetroleumFiniteVolCompressibleTwoPhase,EymHerMic2003MathematicalStudyPetroleumEngineeringScheme,BenKhaSaa2014ConvergenceFiniteVolGasWater}, it has also been done using finite elements \cite{ChaventJaffre1986MathematicalFEReservoirSimulation,Ohlberger1997ConvergenceMixedFEFiniteVolTwoPhasePorousMedia,HubHel2000NodeCenteredFiniteVolMultiphaseHeterogeneousPorousMedia}, and other kinds of methods, but we do not want to be exhaustive here. The degeneracy is the main obstacle to proving the existence of a solution to such schemes. Often, the proof lacks full clarification, since the problem is complex and tricky. The dependence and definition of the approximate quantities before proving the existence of solutions prevent us from using an a priori maximum principle. Moreover, the degeneracy forces the energy estimates to be based on the global pressure $p$ \cite{ChaventJaffre1986MathematicalFEReservoirSimulation,AntKazMon1989BoundaryValue} and on a capillary term $\xi$ (see \eqref{DefCapillaryTerm}) for continuous and discrete solutions. The existence problem due to the degeneracy of the mobilities has been treated in \cite{KhaSaa2010SolutionsCompressible} in the continuous case. The difficulty is circumvented by regularizing the problem, solving it, and then passing to the limit of the regularized solutions to establish that they are solutions to the initial system. We propose a standard regularization of the scheme's result to demonstrate the existence of solutions for a numerical method applied to degenerate problems.

In Section \ref{SectionContinuousModelChap3}, we introduce a quite general type of model to which we can apply our result, the only hypotheses are concerning:
\begin{itemize}
     \item The link between the "pressure" variables and the "saturation" ones, depending on the space. It is designed to take into account not only the capillary pressure and saturating relations but also the heterogeneity of the porous medium. 
     \item The saturations need to verify a maximum principle, keeping their "physical" meaning.
     \item Some "non-physical" variables $\zeta$, on which we have the energy estimates, with a strong link between them and the "pressures", to mimic the role of the global pressure. 
\end{itemize}
Our result can be especially applied to implicit schemes. Assuming we have a scheme holding a discrete maximum principle and discrete energy estimates on the non-physical quantity $\zeta$, we define the concept of regularization of such a scheme in Section \ref{SectionRegularizationOfSchemeChap3}. In Section \ref{Section2TheoremChap3}, we prove that if there exist solutions to a regularized scheme it implies the existence of solutions to the regular scheme.

In Section \ref{Section3ApplicationTwoPhaseFlowChap3}, we present the continuous compressible, immiscible, two-phase Darcy flow in porous media model, which verifies all the hypotheses of our framework (see Section \ref{SectionContinuousModelChap3}). Following, we apply our results to establish the existence of solutions for two schemes. First, we use this strategy on a Discrete Duality Finite Volume (DDFV) implicit scheme \cite{Crozon1}. We build a regularized scheme and prove the existence of the latter solutions, implying the result of the first one. Secondly, one shows that the result also works well in the case of a Control Volume Finite Element (CVFE) implicit scheme introduced in \cite{GhiQueSaa2020PositivityPreservingFiniteVolCompressibleTwoPhaseAni}, following the same path. Both proposed regularizations rely on the $\eps$-perturbation of the mobilities and adding a $\eta$-capillary pressure flow with positive transmissibility coefficients.

\subsection{Continuous model}\label{SectionContinuousModelChap3}

In this work, we are concerned with the existence of a solution to a numerical scheme, for a specific type of model, but we try to open up its scope as much as possible. Starting from a coupled system of degenerate parabolic equations, we assume the scheme has already been built. We have a maximum principle and some energy estimates on the solutions. Usually, the degeneracy of the mobilities is a major issue in proving their existence for the two-phase Darcy flow. We propose a toolbox accompanied by a strategy to prove the existence of a solution to our scheme quite easily and rigorously, at least in the two-phase flow context. We point out that the main result does not only focus on these equations and can include other variants. \\

We carry out the study in a domain $\Om$ of $\R^d$, open and bounded, with $d \ge 1$, up to a final time $\tf$ ($\tf$ can be equal to $+\infty$). One denotes $Q_\tf = \Om \times (0,\tf)$ . We assume that we have the unknowns $p = (p_{\al_i})_{i=1,...,l}$ (we call them "pressures"; they will be the principal unknowns) and $u = (\sati)_{i=1,...,l}$ (which can be called "saturation"), with an integer $l \ge 2$. One calls the $\al_i$ the "phases". For instance, in compressible two-phase flow, one has $\al_1 = \g$, $\al_2 = \w$. Our interest is in the approximation of the degenerate coupled parabolic equation system as follows  
\begin{equation}\label{ModelContinue1Chap3}
     \partial_t \gam_i - \divergence(\Mobi \Lam (\nabla p_{\al_i} + V^i)) = f^i,
\end{equation}
where $\gam_i$, $p_{\al_i}$, $\Mobi$, $\Lam$, $V^{\al_i}$, $f^i$ designate respectively the accumulation term, the principal unknown, the mobility, the permeability tensor, a potential (usually the gravitation potential), and a source term. The proposed result will apply to every type of system of equations as long as one has the three following assumptions \eqref{RelationPiUiChap3}-\eqref{RelationPiXiiChap3}. \\

First, one has a formal relation linking the principal unknowns with the saturations, reducing the number of principal unknowns to $l$, given by $G$, defined almost everywhere on $\Om$, continuous with respect to (w.r.t.) the $p_i$, such that 
\begin{equation}\label{RelationPiUiChap3}
 \begin{aligned}
     & G(x,p_{\al_1},...,p_{\al_i},,...,p_{\al_l})\\ 
     &=(G_1(x,p_{\al_1},...,p_{\al_l}),...,G_i(x,p_{\al_1},...,p_{\al_l}),...,G_l(x,p_{\al_1},...,p_{\al_l}))\\
     & = (u_{\al_1},...,\sati,...,u_{\al_l}).
 \end{aligned} 
 \end{equation}
It can be called the coupling relation.

  Secondly, we want the quantities $\sati$ to adhere to specific physical bounds of type 
 \begin{equation}\label{MaximumPrincipleChap3}
     \uiinf \le u_{\al_i} \le \uisup, \quad \text{for} \quad 1 \le i \le l. 
 \end{equation}
Those bounds are given by the nature of the quantities described. For instance, if the $\sati$ are concentrations, then $\uiinf =0$ and $\uisup = + \infty$; if we have saturations then, $\uiinf =0$ and $\uisup =1$. 
\begin{rem}
    In two-phase flow we have parabolic equations \eqref{ModelContinue1Chap3} where the mobilities $\Mobi(\sati)$ are positive, continuously increasing functions with respect to the $\al_i$-saturation. The degeneracy issue means that the mobilities are vanishing with the saturations: $\Mobi(\sati = \uiinf) = 0$. This degeneracy in multiple cases is an obstacle to proving the existence of solutions to an implicit numerical scheme. Then, we continuously extend the mobilities by their extremal values: $\Mobi(\sati) = \Mobi(\uiinf)$ for $\sati \le \uiinf$ and $\Mobi(\sati) = \Mobi(\uisup)$ for $\sati \ge \uisup$. This is necessary to treat two-phase flows.
\end{rem}

Last, we assume that we have some non-physical quantities $(\zeta_i)_{1,...,l}$ (not directly involved in the state laws), such that there exists a constant $C>0$, functions $g_i(.,.)$ continuous w.r.t. their second variable, two one-to-one integer functions $\sig_1$, $\sig_2 : \llbracket 1,l \rrbracket \rightarrow \llbracket 1,l \rrbracket$, with $\sig_1(i) \neq \sig_2(i)$ for all $i$, verifying for almost every $x$ in $\Om$
\begin{equation}\label{RelationPiXiiChap3}
\begin{aligned}
        & p_{\al_i} = \zeta_i + g_i(x, \sati), \quad  |g_i| \le C |p_{\al_{\sig_1(i)}} - p_{\al_{\sig_2(i)}}|, \\ 
        & \text{and Lipschitz continuous w.r.t $\sati$ independently of $x$}.
\end{aligned}
\end{equation}
In the two-phase flow context, the non-physical quantities will be the global pressure \cite{ChaventJaffre1986MathematicalFEReservoirSimulation}, and the functions $g_i$ will be the corrective pressures (see Section \ref{SectionTwoPhaseModelChap3}).

Here, we do not specify the boundary and initial conditions, but we keep them in mind. 

One highlights that the main hypotheses we need to keep to apply our results are \eqref{RelationPiUiChap3} and \eqref{RelationPiXiiChap3}. The hypothesis, \eqref{MaximumPrincipleChap3} justifies the maximum principle wanted in the scheme. The type of equations treated can differ from the one presented in \eqref{ModelContinue1Chap3} since one verifies the latter hypotheses and the scheme has the right properties.

\subsection{Regularized schemes}\label{SectionRegularizationOfSchemeChap3}

Let $\T$ be a space discretization of $\Om$ with $\sharp \T$ degrees of freedom (d.o.f.). We are looking for discrete vectors $p_{\al_1,...,\al_l,\T} = (p_{\al_1,\T},...,p_{\al_l,\T})$ with $p_{\al_i,\T} \in \R^\T = \R^{\sharp \T}$. First, we assume that a solution $(p_{\al_1,...,\al_l,\T}^{n-1})$ has already been computed for the $(n-1)$th time-step, we want to calculate an approximated solution for the next time-step. Our scheme is given, for every d.o.f. $A\in \T$ and for every phase $i$, by a solution of the following equations
\begin{equation}
    \F_{i,A}( p_{\al_1,...,\al_l,\T}, p_{\al_1,...,\al_l,\T}^{n-1}) = 0.
\end{equation}
Thus, we have $l \times \sharp \T$ discrete equations. In the following, we write \\
$\F_{i,A}^n( p_{\al_1,...,\al_l,\T}) := \F_{i,A}( p_{\al_1,...,\al_l,\T}, p_{\al_1,...,\al_l,\T}^{n-1})$, and designate the scheme by $\F$. We want to solve $\F^n(p_{\al_1,...,\al_l,\T}) = 0$. The scheme considered in the present work are implicit Euler scheme, other times discretizations enter the proposed framework such as the Crank-Nicolson one.

We assume the scheme has been designed to verify key elements. First, a maximum principle on the approximated saturations, which are still given by the coupling: 
\begin{equation}
    \uiinf \le u_{i,A}^k \le \uisup, \quad \forall i \in \llbracket 1,l \rrbracket, \quad \forall A \in \T, \quad \forall k \in \llbracket 0,n-1 \rrbracket.
\end{equation}
Secondly, one has a priori energy estimates on the discrete non-physical quantities $\zeta$, given by the relation \eqref{RelationPiXiiChap3}, where the constant $C_{\zeta}$ is depending on the mesh, time-step, the previous time-step solutions, physical data of the problem, such that for a given discrete norm $\norme{.}_\T$, one has
\begin{equation}
    \sum_{i=1}^l \norme{\zeta_{i,\T}}_\T^2 \le C_{\zeta}.
\end{equation}
Here we consider any discrete norm, in the following, we will consider discrete norms on discrete gradients.

For nonlinear and complex numerical schemes, it is not evident to prove the existence of a solution to the scheme in a quick classic fashion. For instance, in the two-phase Darcy flow, degeneracy is a big problem preventing us from using the classical fixed-point theorems. Then, we propose to introduce a regularization of the scheme such that it is easier to prove the existence of a solution to the regular one (see for instance Section \ref{Section3ApplicationTwoPhaseFlowChap3}), as follows in Section \ref{Section2TheoremChap3}, we show that it gives at least one solution to $\F$.
\begin{dfn}[regularized scheme]
    Let us fix two positive parameters $\eps$, $\eta \ge 0$. A regularized scheme of $\F$, is a $\eps, \eta$-parametrized scheme $\F^{\eps,\eta}$, such that
    \begin{itemize}
        \item \pointa $\F^{\eps,\eta}$ is continuous with respect to $\eps$ and $\eta$,
        \item \pointb $\F^{0,0} = \F$, 
        \item \pointc each solution of $\F^{0,\eta} $, verifies the maximum principle on the approximated saturations, 
        \item \pointd one has regularized energy estimates, with $\CtePointd$ depending on the mesh, the $(n-1)$-solution, the physical data, but independent of $\eps >0$ (it can depend on $\eta>0$), such that
        \begin{equation}\label{RegEnergyEstimates1Chap3}
        \sum_{i=1}^l \norme{\zeta_{i,\T}}_\T^2 + \eps \sum_{i=1}^l \norme{p_{\al_i,\T}}_\T^2 + \eta \sum_{i=1}^{l} \norme{p_{\al_{\sig_1(i)},\T} - p_{\al_{\sig_2(i)},\T}}_\T^2\le \CtePointd.
        \end{equation}
        \item \pointe for $\eps = 0$ and $\eta >0$ regularized energy estimates stands with $\CtePointe$ depending on the mesh, the $(n-1)$-solution, the physical data, but independent of $\eta >0$, such that
        \begin{equation*}\label{RegEnergyEstimates2Chap3}
        \sum_{i=1}^l \norme{\zeta_{i,\T}}_\T^2 \le \CtePointe.
        \end{equation*}
    \end{itemize}
\end{dfn}
\begin{rem}
    The regularized energy estimate \eqref{RegEnergyEstimates1Chap3} is equivalent to
        \begin{equation}\label{RegEnergyEstimates1_2Chap3}
        \sum_{i=1}^l \norme{\zeta_{i,\T}}_\T^2 + \eps \sum_{i=1}^l \norme{p_{\al_i,\T}}_\T^2 + \eta \sum_{i=1}^{l-1} \norme{p_{\al_i,\T} - p_{\al_{i+1},\T}}_\T^2\le C,
    \end{equation}
    since $\sig_1$ and $\sig_2$ are two integer functions bijectives and never equal. The constant $C$ can vary. We will prefer these energy estimates after that since they are easier to handle.
\end{rem}
\pointb makes clear that $\F^{\eps,\eta}$ is a regularization of $\F$. The fact that we have the $\eps$,$\eta$-components in \pointd is often a key to proving the existence of regularized solutions. 

\subsection{Existence theorem for $\F$}\label{Section2TheoremChap3}

This part aims to prove that since we are able to show the existence of solutions for every $\eps >0$, $\eta > 0$, no matter the way we prove this existence, it follows there remains not less than one solution to  $\F=\F^{0,0}$. We will illustrate that it is easier to show the existence of a regularized scheme in Section \ref{Section3ApplicationTwoPhaseFlowChap3}.

\begin{thm}
    Assuming we have a solution to the regularized scheme $\F^{\eps,\eta}$ for every $\eps >0$, $\eta > 0$ (we will call them regularized solutions, and write it $p_{\al_1,...,\al_l,\T}^{\eps,\eta}$). Then, there exists a solution to the scheme $\F$.
\end{thm}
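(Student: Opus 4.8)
The plan is to recover a solution of $\F=\F^{0,0}$ by a \emph{sequential} double passage to the limit inside the fixed finite-dimensional space $\R^{l\times\sharp\T}$: first one freezes $\eta>0$ and lets $\eps\to0$, obtaining a solution of $\F^{0,\eta}$ for each $\eta$, and then one lets $\eta\to0$. The limits cannot be taken simultaneously along a diagonal $(\eps_k,\eta_k)$: the constant $\CtePointd$ in \pointd, although independent of $\eps$, is allowed to depend on $\eta$ and may degenerate as $\eta\to0$, whereas the weaker estimate \pointe, available only at $\eps=0$, has a constant $\CtePointe$ that is genuinely $\eta$-uniform. Since the ambient space is finite-dimensional, at each stage ``compactness'' is nothing but uniform boundedness of the pressure vectors, so the only substantive step is to convert the energy estimates — which a priori control the non-physical quantities $\zeta_{i,\T}$ (and the $\eps,\eta$-weighted corrections) — into genuine bounds on the pressures, via the coupling \eqref{RelationPiUiChap3} and the structural relation \eqref{RelationPiXiiChap3}.

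First step ($\eps\to0$). Fix $\eta>0$, take $\eps_k\downarrow0$, and let $p^{\eps_k,\eta}:=p_{\al_1,\dots,\al_l,\T}^{\eps_k,\eta}$ solve $\F^{\eps_k,\eta}(p^{\eps_k,\eta})=0$. From \pointd in the form \eqref{RegEnergyEstimates1_2Chap3}, the norms $\norme{\zeta_{i,\T}^{\eps_k,\eta}}_\T$ and $\norme{p_{\al_i,\T}^{\eps_k,\eta}-p_{\al_{i+1},\T}^{\eps_k,\eta}}_\T$ are bounded by $\CtePointd$ uniformly in $k$. Writing $p_{\al_i,\T}=\zeta_{i,\T}+g_i(\cdot,u_{\al_i,\T})$ and using $|g_i|\le C\,|p_{\al_{\sig_1(i)}}-p_{\al_{\sig_2(i)}}|$, the corrective terms are controlled by the bounded pressure differences, while a discrete Poincaré inequality — furnished by the Dirichlet part of the boundary data one keeps in mind, which makes $\norme{\cdot}_\T$ coercive on the admissible space, the purely Neumann case being handled by a mean normalization — turns the bound on $\norme{\zeta_{i,\T}^{\eps_k,\eta}}_\T$ into a bound on $\zeta_{i,\T}^{\eps_k,\eta}$ itself. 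Hence $(p^{\eps_k,\eta})_k$ is bounded in $\R^{l\times\sharp\T}$; extracting, $p^{\eps_k,\eta}\to p^{0,\eta}$. Continuity of $G$ in \eqref{RelationPiUiChap3} yields convergence of the saturations, continuity of the $g_i$ in \eqref{RelationPiXiiChap3} that of the $\zeta_{i,\T}$, and passing to the limit in $\F^{\eps_k,\eta}(p^{\eps_k,\eta})=0$ — using \pointa for the $\eps$-dependence and the continuity of the scheme in its discrete argument — gives $\F^{0,\eta}(p^{0,\eta})=0$.

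Second step ($\eta\to0$). For every $\eta>0$ there is now a solution $p^{0,\eta}$ of $\F^{0,\eta}$; by \pointc it satisfies the maximum principle \eqref{MaximumPrincipleChap3}, so $u_{\al_i,\T}^{0,\eta}\in[\uiinf,\uisup]$, and by \pointe one has $\sum_{i}\norme{\zeta_{i,\T}^{0,\eta}}_\T^2\le\CtePointe$ with $\CtePointe$ independent of $\eta$. Choosing $\eta_m\downarrow0$, one bounds $(p^{0,\eta_m})_m$ uniformly in $\R^{l\times\sharp\T}$ (see below), extracts $p^{0,\eta_m}\to p^{0,0}$, and passes to the limit in $\F^{0,\eta_m}(p^{0,\eta_m})=0$ using \pointa for the $\eta$-dependence and \pointb, $\F^{0,0}=\F$, to conclude that $\F(p^{0,0})=0$.

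The hard part is precisely the $\eta$-uniform bound on $(p^{0,\eta_m})_m$ used in the second step, because the device of the first step is no longer available: at $\eps=0$ only \pointe holds, and it does not contain the $\eta$-weighted pressure-difference term, so $|g_i|\le C|p_{\al_{\sig_1(i)}}-p_{\al_{\sig_2(i)}}|$ can no longer be exploited. One must instead use the remaining content of \eqref{RelationPiXiiChap3}: each $g_i(x,\cdot)$ is Lipschitz in $\sati$ uniformly in $x$, so — the saturations being confined by the maximum principle to the fixed compact $[\uiinf,\uisup]$ — the values $g_i(\cdot,u_{\al_i,\T}^{0,\eta_m})$ stay in a set bounded independently of $m$; together with the $\eta$-uniform, Poincaré-reinforced bound on $\zeta_{i,\T}^{0,\eta_m}$ from \pointe and the identity $p_{\al_i,\T}^{0,\eta_m}=\zeta_{i,\T}^{0,\eta_m}+g_i(\cdot,u_{\al_i,\T}^{0,\eta_m})$, this gives the required control. (When $\uisup=+\infty$ this argument needs an extra input on $g_i$; in the two-phase setting the $g_i$ are the corrective pressures, which vanish at the reference saturation, so the Lipschitz bound suffices.) A last point to record, used in both passages to the limit, is the joint continuity of $(p,\eps,\eta)\mapsto\F^{\eps,\eta}(p)$: it is not literally stated — only \pointa is — but it follows from \pointa and the continuity of $\F$ in its discrete unknowns, which is implicit in speaking of solving $\F^n(p)=0$.
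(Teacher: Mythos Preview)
Your proposal is correct and follows the same two-step passage to the limit ($\eps\to0$ with $\eta$ fixed, then $\eta\to0$) as the paper, using \pointd together with the bound $|g_i|\le C|p_{\al_{\sig_1(i)}}-p_{\al_{\sig_2(i)}}|$ from \eqref{RelationPiXiiChap3} in the first step, and \pointc, \pointe together with the Lipschitz control of $g_i$ on the saturation range in the second. One simplification you overlook: the discrete Poincar\'e inequality you invoke is unnecessary at the abstract level, since $\norme{\cdot}_\T$ is assumed to be a genuine norm on a finite-dimensional space, so norm equivalence alone converts the bound on $\norme{\zeta_{i,\T}}_\T$ into a pointwise bound --- this is exactly what the paper does, without importing any Dirichlet or mean-normalization hypothesis into the abstract theorem.
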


\begin{proof}[Proof]
One fixes $\eta >0$. Since the regularized energy estimates \eqref{RegEnergyEstimates2Chap3} given by \pointd are fulfilled, one infers the uniform bounds w.r.t $\eps >0$:
\begin{equation*}\label{UniformBoundEnergy}
        \left\{
        \begin{aligned}
            &( \zeta_{i,\T}^{\eps, \eta})_{\eps>0}  & \text{is uniformly bounded for every $i$ in }& \llbracket 1,l \rrbracket,& \\
            &( \sqrt{\eps} p_{\al_i,\T}^{\eps, \eta})_{\eps>0} & \text{is uniformly bounded for every $i$ in }& \llbracket 1,l \rrbracket,& \\
            &( \sqrt{\eta} (p_{\al_i,\T}^{\eps, \eta} - p_{\al_{i+1},\T}^{\eps, \eta}))_{\eps>0} \quad & \text{is uniformly bounded for every $i$ in }& \llbracket 1,l-1 \rrbracket.& \\
        \end{aligned}
        \right.
    \end{equation*}
We do not specify the norm because of the norm equivalence in finite dimension.
Thanks to relation \eqref{RelationPiXiiChap3}, because of Lipschitz assumption on $g_i$, the $\eta$ fixed and the uniform bound on $\sqrt{\eta} (p_{\al_i,\T}^{\eps, \eta} - p_{\al_{i+1},\T}^{\eps, \eta})$, we obtain that 
\begin{equation*}
    (  p_{\al_i,\T}^{\eps, \eta})_{\eps>0} \quad \text{is uniformly bounded for every $i$ in $\llbracket 1,l \rrbracket$}.
\end{equation*}
So, taking a sequence $(\eps_n)$ of strictly positive real numbers, converging to $0$ as $n$ goes to $\infty$, we have the sequence $(p_{\al_1,...,\al_l,\T}^{\eps_n, \eta})_n$, uniformly bounded in $(\R^\T)^l$. Because the dimension is finite, it is possible to extract a converging subsequence towards a limit, reading $p_{\al_1,...,\al_l,\T}^{\eta}$, which is solution of $\F^{0,\eta}$ because of \pointa and the continuity of $\F$. 

The third property \pointc, implies $p_{\al_1,...,\al_l,\T}^{\eta}$ verifies the maximum principle \eqref{MaximumPrincipleChap3}, for every $\eta >0$. Using \pointe, we have the uniform bounds but this time w.r.t. $\eta>0$ :
\begin{equation*}\label{UniformBoundEnergy2} 
        \begin{aligned}
            &( \zeta_{i,\T}^{ \eta})_{\eta>0}  & \text{is uniformly bounded for every $i$ in }& \llbracket 1,l \rrbracket.& \\
        \end{aligned}
\end{equation*}

The kept relations at the discrete level \eqref{RelationPiXiiChap3} and \eqref{RelationPiUiChap3}, combined with the maximum principle, Lipschitz-continuity of $g_i$ and the $\eta$-energy estimates, gives us that for all $1 \le i \le l$ and for all $A \in \T$:
\begin{equation*}
    |p_{\al_i,A}^{\eta}| \le |\zeta_{i,A}^{\eta}| + \norme{g_i}_{\infty, [ \uiinf, \uisup]} .
\end{equation*}
It implies 
\begin{equation*}
 ( p_{\al_i,\T}^{ \eta})_{\eta>0}  \quad \text{is uniformly bounded w.r.t. $\eta$, for every $i$ in } \llbracket 1,l \rrbracket. 
\end{equation*}
Then, taking $(\eta_n)$ a strictly positive sequence converging to $0$, one can extract a convergent subsequence of $(p_{\al_1,...,\al_l,\T}^{\eta_n})$, written $p_{\al_1,...,\al_l,\T}$. Because of the continuity \pointb, and the first property of the regularized scheme \pointa, $p_{\al_1,...,\al_l,\T}$ is a solution of the numerical scheme $\F$.     
\end{proof}
In this proof, we see that the hypotheses on the model \eqref{RelationPiUiChap3}-\eqref{RelationPiXiiChap3}, and those on the scheme and regularized scheme, are made to enable passing to the limit. Moreover, the $\epsilon$-part of the regularization energy estimates \pointd will be a crucial element in demonstrating the existence of regularized solutions more easily.

\section{Application to two schemes}\label{Section3ApplicationTwoPhaseFlowChap3}

Both of our applications are built on the same two-phase Darcy flow in porous media model that we will display in the following subsection \ref{SectionTwoPhaseModelChap3}. There are a lot of schemes on those models, but we will focus on two kinds of complex numerical schemes to demonstrate how our approach handles these problems. In a first time, we consider a pure finite volume scheme. In a second time, the proposed strategy is applied to a combined finite volume finite element scheme.

\subsection{The model}\label{SectionTwoPhaseModelChap3}

We are interested in the compressible two-phase flow in porous media problem \cite{BeaBac2012IntroductionTransportPorousMedia,GhiQueSaa2020PositivityPreservingFiniteVolCompressibleTwoPhaseAni,ChaventJaffre1986MathematicalFEReservoirSimulation}. In this model we have $l =2$ phases: a gazeous phase $\al_1 = \g$ and a wetting one $\al_2 = \w$. The capillary pressure $\pc : \R \rightarrow \R$  is an increasing homeomorphism, piecewise $C^1$ on $\R$, with bounded derivatives. Moreover, it verifies $\pc(0) = 0$. This work can be easily adapted to the heterogeneous medium case where the capillary pressure differs from one rock type to another. Then, we have the link between phase pressures and saturations (see \eqref{RelationPiUiChap3}) given by
\begin{equation}\label{RelationPiUiModel}
    G(x,p_{\g},p_{\w})= (s_\g,s_\w) = \left(\pc^{-1}(p_\g - p_\w), 1 - \pc^{-1}(p_\g - p_\w) \right). 
\end{equation}
The system is composed of two parabolic degenerate equations that are derived from the mass conservation for each phase. It reads 
\begin{equation} \label{BeginModel}
	\left\{
	\begin{array}{l}
		\phi \partial_t (\rho_{\al} s_{\al}) + \divergence (\rho_{\al} \V_{\al} ) + \rho_{\al} q^{\al} = 0 \quad \hbox{in} \quad Q_{\tf }, \quad \forall \al \in \{ \g, \w\}, \\
        \V_{\al} = - \dfrac{K_{r\al}}{\mu_{\al}} \Lam (\nabla p_{\al} - \rho_{\al} \grav), \quad \forall \al \in \{ \g, \w\}.
	\end{array}
	\right.
\end{equation}
In the first equation, $\phi$ refers to the porosity of the medium, and for each $\al$-phase $\rho_{\al}(p_{\al})$, $s_{\al}$, $q^{\al}$, $\V_{\al}$ respectively stand for the density depending only on the phase pressure $p_\al$, the saturation, the source term, the velocity. Each phase velocity is given in the second equation by the diphasic Darcy-Muscat law, where the mobility $M_\al(s_\al) =  K_{r\al} / \mu_{\al}$ is the relative permeability over the dynamic viscosity, $\Lam$ the permeability tensor of the medium and $\grav$ the gravitational acceleration. 

The saturations have a physical range (see \eqref{MaximumPrincipleChap3}) given by 
\begin{equation}\label{MaximumPrincipleModel}
    0 \le s^{\al} \le 1, \quad \text{for} \quad \al \in \{\g, \w \}. 
\end{equation}

The mobilities $M_\al$ are continuously increasing with respect to the saturation, positive, and degenerated. We consider their extensions $M_\al(u) =0$ for $u \le 0$ and $M_\al(u) =M_\al(1)$ for $u \ge 1$. We will consider in the following sections the $\eps$-regularized mobilities
\begin{equation}\label{EpsilonMobilities}
    M_\al^\eps(s) = \eps + M_\al(s), \quad \forall s \in \R.
\end{equation}

Here are the classical hypotheses on the main data. \begin{itemize}
    \item The porosity is bounded almost everywhere on $\Om$ by two strictly positive constant $\phiinf$, $\phisup>0$. It writes $ \phi \in L^{\infty}(\Om)$ with $\phiinf \le \phi(x) \le \phisup$ for a.e. $x$ in $\Om$.
    \item The permeability tensor is a symmetric positive-definite matrix, which is essentially bounded. Moreover, it is uniformly elliptic i.e. there exist constants $\Laminf$ and $\Lamsup$ such that 
	\begin{equation} \label{HypoPermea}\Laminf |v|^2 \le \Lam(x) v\cdot v \le \Lamsup |v|^2 \quad \text{ for all} \quad v \in \R^d  \quad \text{and a.e.} \quad x \in \Om . \end{equation}
    \item The density $\rho_{\al} \in C^1(\R,\R)$ is increasing (with the pressure) and uniformly bounded 
    \begin{equation*} 0 < \rhoinf \le \rho_{\alpha}(p_{\alpha}) \le \rhosup ,\end{equation*} for some positive constants $\rhoinf$, $\rhosup$.
\end{itemize}
The system is closed by compatible initial conditions on the pressures, with Neumann and Dirichlet boundary conditions:
\begin{equation*} \label{EndModel}
	\left\{
	\begin{array}{cc}
		\rho_{\g} \V_{\g} \cdot \n = \rho_{\w} \V_{\w} \cdot \n =0   & \text{  on  }   \Gam_\Neu \times (0,\tf ) , \\
		p_{\g} = p_{\g}^\Dir \text{  and  }  p_{\w} = p_{\w}^\Dir  & \text{  on  }   \Gam_\Dir \times (0,\tf),   \\
		p_{\g}(.,0) = p_{\g}^{ini} ~\text{  and  }  p_{\w}(.,0) = p_{\w}^{ini} & \text{ in  }    \Om ,
	\end{array}
	\right.
\end{equation*}
where $\{ \Gam_\Dir, \Gam_\Neu \}$ is a partition of the border $\partial \Om = \Gam_\Dir \cup \Gam_\Neu$ with $|\Gam_\Dir|>0$. We write $\n$ the outward unit normal of the Neumann border. 
We carry out our study in two dimensions. For the sake of simplicity, we assume we are in the case of a horizontal domain such that we can neglect the gravitational terms; moreover, there will be no source terms and we will set uniform Dirichlet boundary conditions $p_\g^\Dir=p_\w^\Dir=0$. Adding these ingredients is not a problem, it only complicates the writing of the formulas without hiding conceptual difficulties.

The concept of global pressure has been introduced in \cite{ChaventJaffre1986MathematicalFEReservoirSimulation}. This non-physical pressure is very useful in the analysis of the scheme to handle the degeneracy issue. We define the total mobility by $M(s_\g) = M_\w(1-s_\g) + M_\g(s_\g) \ge m_0 > 0$. This artificial pressure $p$, defined using corrective pressures $\hat{p}_\g $, $ \hat{p}_\w$ in the following way  
\begin{equation*} \label{GlobalPressureChap3}
\begin{aligned}
    & p_\g = p + \hat{p}_\g(s_\g), \quad p_\w = p - \hat{p}_\w(s_\g) \\
    & \text{where} \quad 
	\left\{
	\begin{array}{l}
		\displaystyle	\hat{p}_\g(s_\g) = \int_0^{s_\g} \dfrac{M_\w(1-u)}{M(u)} p'_c(u) \diff u \\
		\displaystyle	\hat{p}_\w(s_\g) = \int_0^{s_\g} \dfrac{M_\g(u)}{M(u)} p'_c(u) \diff u \\
	\end{array}
	\right. .
\end{aligned}
\end{equation*}
We see that we verify the hypothesis of \eqref{RelationPiXiiChap3}, with the same non-physical quantity for $p_\g$ and $p_\w$: the global pressure $p$. The corrective pressures are Lipschitz-continuous w.r.t. their respective saturations because the capillary pressure has a bounded derivative. Moreover, one writes 
\begin{equation}
\begin{aligned}
    & \left|\hat{p}_\g(s_\g)\right| = \left| \int_0^{s_\g} \dfrac{M_\w(1-u)}{M(u)} p'_c(u)  \diff u \right| \le \left| \int_0^{s_\g} p'_c(u)  \diff u \right| \le \left| p_\g - p_\w \right| \\ & \text{and similarly} \quad \left|\hat{p}_\w(s_\g)\right|  \le \left| p_\g - p_\w \right|.
\end{aligned}
\end{equation}

We will also use the feature of the function $\xi$ in energy estimates, of great help to prove the convergence, defined by
\begin{equation}\label{DefCapillaryTerm} 
\displaystyle	   \xi(s_\g)= \int_0^{s_\g} \dfrac{\sqrt{M_\w(1-u)M_\g(u)}}{M(u)} p'_c(u) \diff u .	
\end{equation}
The following nonlinear functions \cite{KhaSaa2010SolutionsCompressible} are of great use to show the energy estimates
\begin{equation}\label{NolinearFunctions}
    \displaystyle g_{\al}(p_{\al}) =  \int_0^{p_{\al}} \dfrac{1}{\rho_{\al}(b)}\diff b \quad \text{and} \quad  \HH_{\al}(p_{\al}) = {\rho_{\al}}(p_{\al}) g_{\al}(p_{\al}) -p_{\al}.
\end{equation}

\subsection{Positivity-Preserving DDFV scheme for compressible two-phase flow}\label{BPDDFVApplication}

First, we introduce the Discrete Duality Finite Volumes (DDFV) setting, and then we present the PP-DDFV scheme \cite{Crozon1}. We build a regularized scheme and prove the existence of the regularized solutions.

\subsubsection{DDFV settings}

We study a scheme in a $2D$ domain $\Om \subset \R^2$. First, we describe briefly the three meshes used in the DDFV-method \cite{DomOmn2005FiniteVolLaplaceEqu2DgridsDDFV, AndBoyHub2007DDFVLerayLionsElliptic2D,Krell2010These} to set up the useful notations. The description of the different types of meshes is inspired from \cite{IbrQueSaa2020positiveDDFVDegenerateParabolicChemotaxis,KreMoa2023StructurePreservingDriftDiffusionGeneralMeshesDDFVvsHFV}. \\

\textbf{The primal mesh:} \\ 
The primal interior mesh $\M$, is a collection of open disjoint polygons called primal cells, usually written $K$  covering $\Om$ (i.e. $\cup_{K \in \M} \overline{K} = \overline{\Om}$). We let $\partial \M$ be the set of boundary edges, which can be seen as degenerate cells. $\overline{\M}$ is then defined as the reunion of $\M$ and $\partial \M$. For each cell $K \in \overline{\M}$, we fix a point $x_K$ called its center, and we set $X_{int} = \{ x_K , K \in \M \}$, $X_{ext} = \{ x_K , K \in \partial \M \}$ and we write $X = X_{int} \cup X_{ext}$. The vertices of $\overline{\M}$ are split between those in the interior and the ones on frontier $X^* = X_{int}^* \cup X_{ext}^*$. For two neighboring primal cells $K$ and $L$, we assume $\partial K \cap \partial L = K|L$ is a segment, corresponding to an internal edge of the mesh if both cells are interior (in $\E_{int}$) or an exterior edge if one of the cells is in $\partial \M$ then the exterior cell can be confounded with the edge. We set $\E = \E_{int} \cup \partial \M$, the set of all the edges of the primal mesh. For a cell $K$, one denotes $\E_K$ the set of its edges, we distinguish the interior interfaces $\E_{K,int}$, from the exterior ones $\E_{K,ext}$. One sets the outward unit normal to $\sig \in \E_K$ as $\n_{\sig,K}$. 
\\

\textbf{The dual mesh:} \\ 
The dual control volumes are centered on the elements of $X^*$, written $x_{K^*}$ for a dual cell $K^*$. A cell $K^*$ is built by straightly joining, in the circular sense, the centers of the primal cells sharing the underlined vertex. When $x_{K^*}$ is in $X^*_{ext}$, we connect $x_{K^*}$ to the two midpoints of the two exterior primal edges sharing $x_{K^*}$ as a vertex. The dual edges $\sig^* \in \E^*$ are the segments linking the centers of the adjoining primal cells and when the two cells are at the border, we take the two segments connecting each center with the vertex.  We denotes $\M^*$, $\partial \M^*$ respectively the dual volumes constructed from $X^*_{int}$, $X^*_{ext}$. Then, one write the dual mesh $\overline{ \M^*} = \M^* \cup \partial \M^*$. Like in the primal case, one defines $\n_{\sig^*,K^*}$ the outward unit normal to $\sig^* \in \E_{K^*}$, for all $K^* \in \overline{ \M^*}$. 
\\

\textbf{The diamond mesh:} \\ 
For $\sig =K|L$ ($K$ or $L$ can be in $\partial \M$) with vertices $x_{K^*}$ and $x_{L^*}$, we define the quadrilateral diamond $\D_{\sig, \sig^*}$ $(\sig = K|L, \sig^*=K^*|L^*)$, whose vertices are $x_K$, $x_{K^*}$, $x_L$ and $x_{L^*}$. The diamond is built by connecting precedent points in the same order. When $\sig \in \partial \M$, the diamond degenerates into a triangle. One denotes $\D$ a diamond cell and $\DD$ the diamond mesh. Notice that $\D$ does not necessarily have a convex shape, it depends  on the location of the vertices of $\sig$. One defines $\al_{\D}$ the angle between the interfaces i.e. the angle between $(x_K,x_L)$ and $(x_{K^*},x_{L^*})$. \\

\textbf{Boundary conditions:}
We make the assumption that $\GamD \cap \GamN$ is a set of vertices of the primal mesh, then, the centers of the primal boundary cells are exclusively in $\GamD$ or $\GamN$. Thus, we divide $\partial \M$ between $\partial \M_\Dir = \left\{ K \in \partial \M, x_K \in \GamD \right\}$ and $\partial \M_\Neu = \left\{ K \in \partial \M, x_K \in \GamN \backslash \GamD \right\}$. As for the primal mesh, one sets \\ $\partial \M_\Dir^* = \left\{ K^* \in \partial \M^*, x_{K^*} \in \GamD \right\}$ and $\partial \M_\Neu^* = \left\{ K \in \partial \M^*, x_{K^*} \in \GamN \backslash \GamD \right\}$. We also define $\overline{\M}_\Neu = \M \cup \partial \M_\Neu$ and $\overline{\M^*}_\Neu = \M^* \cup \partial \M^*_\Neu$, since we will have homogeneous Dirichlet boundary conditions on pressures in our scheme and we will look after those such discrete solutions.  \\

We denote $\T= \left( \overline{\M}, \overline{\M^*} \right)$ the DDFV mesh. For any $A$ in $\M$, $\overline{\M^*}$ or $\DD$, $m_A$, $d_A$ stands respectively for the $d$-Lebesgue measure and the diameter of the cell. Similarly for $\Tilde{\sig} \in \E, \E^*$, $m_{\Tilde{\sig}}$ is the $(d-1)$-Lebesgue measure, or its length. The diamond measure can be computed via : $ \displaystyle m_{\D} = \dfrac{1}{2} m_{\sig} m_{\sig^*} \sin(\al_{\D})$.

\subsubsection{Discrete operators and functions}

One defines the discrete spaces $\R^{\T_\Dir}$, $\R^\T$ and $\R^\T_\Dir$. We have 
\begin{equation*} 
\begin{aligned}
    & u_{\T_\Dir} = \Big((u_K)_{K \in \overline{\M}_\Neu} ,(u_{K^*})_{K^* \in \overline{\M^*}_\Neu} \Big) \in \R^{\T_\Dir}, \\
    & \quad u_{\T} = \Big((u_K)_{K \in \overline{\M}} ,(u_{K^*})_{K^* \in \overline{\M^*}} \Big) \in \R^{\T},
\end{aligned} 
\end{equation*} 
and $\R^\T_\Dir$ is composed of the elements $u_\T$ of $\R^\T$ such that $u_A = 0$ for every $A \in \partial \M_\Dir \cup \partial \M^*_\Dir$. We have a linear injection $\R^{\T_\Dir} \rightarrow \R^\T_\Dir \subset \R^\T$. In the following, $u_\T$ will refer to an element of $\R^{\T_\Dir}$ or $\R^\T_\Dir$, depending on the context. If $f: \R \rightarrow \R$ is a nonlinear function, we denote by $f(u_{\T})$ the vector: 
\begin{equation*} f(u_{\T}) = \Big((  f(u_K))_{K \in \overline{\M}} ,(f(u_{K^*}))_{K^* \in\overline{\M^*}} \Big) \in \R^\T.\end{equation*} 
For any $g$ in $\R^\T$, one writes  
\begin{equation} 
\ddAB g = g_B - g_A, \quad \forall A,B \in \overline{\M} \cup \overline{\M^*}. 
\end{equation}
Following, $(\R^2)^{\DD}$ stands for the set of vector fields, composed of piecewise constants on diamonds, of the form: $\zeta_{\DD} = (\zeta_{\D})_{ \D \in \DD}$. In the DDFV approach, the discrete gradient operator is a linear mapping from $\R^{\T}$ (or $\R^{\R_\Dir}$) to $(\R^2)^{\DD}$, its purpose is to mimic a gradient \cite{DomOmn2005FiniteVolLaplaceEqu2DgridsDDFV,AndBoyHub2007DDFVLerayLionsElliptic2D}. It is defined for every $u_{\T} \in \R^{\T}$ by: 
\begin{equation*}
\begin{aligned}
    & \nabla^{\DD}u_{\T} = \sum_{ \D \in \DD} \nabla^{\D}u_{\T} \mathbf{1}_{\D}, \\
    & \text{with} \quad  \nabla^{\D}u_{\T} = \dfrac{1}{\sin (\al_{\D})} \left( \dfrac{\ddKL u}{m_{\sig^*}} \n_{\sig,K} +  \dfrac{\ddKeLe u}{m_{\sig}}\n_{\sig^*,K^*} \right) , \quad \forall \D \in \DD,
\end{aligned}
\end{equation*}
where $\mathbf{1}_{\D}$ is the characteristic function of $\D$. If $\D$ has an edge on the boundary, the value $u_L$ is assumed to be known and imposed from the boundary conditions (Dirichlet or Neumann). 

The permeability or stiffness tensor is approximated on the diamond using its mean value on $\D$  
\begin{equation} \Lam_{\D} = \dfrac{1}{m_{\D}}\int_{\D} \Lam(x) \diff x .\end{equation}
Now, we can give the transmissibility coefficients :
\begin{equation}\label{Transmissibilities}
\begin{aligned}
    	 \tau_{KL} = \dfrac{m_{\sig}}{  m_{\sig^*}} \dfrac{ \langle \Lam_{\D} \n_{KL}, \n_{KL} \rangle}{\sin( \al_{\D})} > 0, & \quad \tau_{K^*L^*} = \dfrac{m_{\sig^*}}{  m_{\sig}}\dfrac{ \langle \Lam_{\D} \n_{K^*L^*}, \n_{K^*L^*} \rangle}{\sin( \al_{\D})} > 0, \\
         & \eta_{\D} = \dfrac{ \langle \Lam_{\D} \n_{KL}, \n_{K^*L^*} \rangle}{\sin( \al_{\D})}. \\
\end{aligned}
\end{equation}
We equip the finite-dimensional space $\R^{\T}$ (or $\R^{\T_\Dir}$) with the $L^p$-semi-norm $|.|_{p, \T}$ (which is a norm on $\R^{\T_\Dir}$), as follow, for $u_{\T} \in \R^{\T}$
\begin{equation*} 
|u_{\T}|_{p, \T} = \left( \dfrac{1}{2} \sum_{K \in \M} m_K |u_K|^p + \dfrac{1}{2} \sum_{ K^* \in \overline{ \M^*}} m_{K^*} |u_{K^*}|^p \right)^{\dfrac{1}{p}} \quad \mbox{with } 1 \le p < +\infty.
\end{equation*}
One can define two norms on $\R^{\T_\Dir}$, using the DDFV gradient, such as
\begin{equation*} 
\begin{aligned}
    \norme{ u_{\T} }_{\T,\DD}^2 & = \sum_{ \D \in \DD} m_{\D} \norme{\nabla^{\D} u_{\T}}^2, \\
    \norme{u_\T}_{\T,\tau}^2 & = \sum_{\D \in \DD} \tau_{KL} (\ddKL u)^2 + \tau_{K^*L^*} (\ddKeLe u)^2 .\\
\end{aligned}
\end{equation*} 
We are able to show, since we have the permeability verifying \eqref{HypoPermea}, that $\norme{.}_{\T,\DD}$ and $\norme{.}_{\T,\tau}$ are two equivalent norms. Moreover, there exists $C>0$ depending only on the mesh regularity, for all $u_\T$ in $\R^{\T_\Dir}$ such that 
\begin{equation} \label{IneqNorms}
    |u_\T|_{1,\T} \le C \norme{u_\T}_{\T,\tau}. 
\end{equation}

\subsubsection{Presentation of the scheme}

We introduce the implicit PP-DDFV finite volume method (see \cite{Crozon1,CrozonQueSaa2023FVCA10DDFVTwoPhaseIncompressible}) \eqref{DebutSchemeDDFV}-\eqref{rhoKL}. We split the time interval into subintervals $[t^n,t^{n+1}[$ such that $0=t_0 < t^1< ...< t^\Nf= \tf$. One denotes $\dt = t^{n+1}-t^n$, it could be taken uniform, but it does not impact the main  result. We assume that  $p_{\T}^n$ in $\R^{\T_\Dir}$ is verifying \eqref{MaximumPrincipleModel}. We take the notation $|\g| = 0$, $|\w| = 1$. For simplicity, we omit the implicit time $n+1$ superscript. We are looking for $p_{\T}$ in $\R^{\T_\Dir}$ solution to the regularized scheme, given by $\F^{\eps,\eta}$ as follow, with $\eps \ge 0$, $\eta \ge 0$
\begin{itemize}
    \item[-] For $K \in \M$,
    \begin{equation}\label{DebutSchemeDDFV}
\begin{aligned}
    & \F_{\al,K}^{\eps,\eta} (p_{\T}, p_{\T}^{n}) 
    \\
    &= m_K \phi_K \left( \rho_{\al}(p_{\al,K}) \Z(s_{\al,K}) - \rho_{\al}(p_{\al,K}^{n}) s_{\al,K}^{n} \right) \\
    & -  \dt  \sum_{\sig = K|L \in \E_K} \rho_{\al, KL} V_{KL}^{\al,\eps} - \dt \eta (-1)^{|\al|}\sum_{\sig = K|L \in \E_K} \rho_{\al, KL} p_{c,KL}.
\end{aligned}
\end{equation}
\item[-] For $K^* \in  \M^* \setminus \M^*_\Dir$,
\begin{equation}\label{EqDual}
	\begin{aligned}
		& \F_{\al,K^*}^{\eps,\eta} (p_{\T}, p_{\T}^{n})\\
        & =  m_{K^*} \phi_{K^*} \left( \rho_{\al}(p_{\al,K^*}) \Z(s_{\al,K^*}) - \rho_{\al}(p_{\al,K^*}^{n}) s_{\al,K^*}^{n}   \right)  \\ 
        & - \dt  \sum_{\sig^* = K^*|L^*\in \E_{K^*}} \rho_{\al, K^*L^*}V_{K^*L^*}^{\al, \eps} - \dt \eta (-1)^{|\al|}\sum_{\sig = K^*|L^* \in \E_K^*} \rho_{\al, K^*L^*} p_{c,K^*L^*}.
	\end{aligned}
\end{equation}
\item[-] For $K \in \partial \M_\Neu$ ($K = K|L$ with $L\in \M$),
\begin{equation}
    \begin{aligned}
        \F_{\al,K}^{\eps,\eta} (p_{\T}, p_{\T}^{n}) = - \dt\left( \rho_{\al,KL} V_{KL}^{\al, \eps} - \eta (-1)^{|\al|} p_{c,KL} \right).
    \end{aligned}
\end{equation} 
\end{itemize}
\begin{rem}
The Dirichlet boundary conditions are fixed by choosing to search solution in $\R^{\T_\Dir}$. If we are looking for a solution in $\R^\T$, then we have to fix it by adding in $F^{\eps,\eta}$ the term, for $A \in \partial \M_\Dir \cup {\M^*}_\Dir$,
    \begin{equation*}
        \begin{aligned}
            &\F_{\al,A}^{\eps,\eta} (p_{\T}, p_{\T}^{n}) = 0. \\
        \end{aligned}
    \end{equation*}
\end{rem}
We approximate the porosity by its mean value on the control volume:
\begin{equation}\label{MeanPorosity}
	\phi_A = \dfrac{1}{m_A} \int_A \phi(x) \diff x .
\end{equation}
One denotes $\Z$ the continuous piecewise affine function, to force the maximum principle in the discrete equations:
\begin{equation}\label{FonctionZChap3}
    \Z(s) = \left\{
    \begin{array}{ll}
        0        &  \quad \text{if} \quad s< 0 \\
        s    & \quad \text{if} \quad s \in [0,1], \\
        1   &  \quad \text{if} \quad s>1.
    \end{array}
\right.
\end{equation}
We have the projected $\al$-phase velocity $V_{KL}^{\al,\eps}$ (resp. $V_{K^*L^*}^{\al,\eps}$) and capillary flow $p_{c,KL}$ (resp. $p_{c,K^*L^*}$) at the interface $\sig = K|L$ (resp. $\sig^* = K^*|L^*$) given by
\begin{equation}\label{V_KL_alpha}
	\begin{aligned}
 &V_{KL}^{\al,\eps} :=   M_{\al,KL}^{up,\eps} \tau_{KL} \ddKL p_\al + \sqrt{ M_{\al,KL}^{min,\eps}}  \sqrt{  M_{\al,K^*L^*}^{up,\eps}} \eta_{\D} \ddKeLe p_\al, \\
 & p_{c,KL} = \tau_{KL} \ddKL (p_{\g}- p_{\w}), \\ 
 & V_{K^*L^*}^{\al,\eps} :=   M_{\al,K^*L^*}^{up,\eps} \tau_{K^*L^*} \ddKeLe p_\al + \sqrt{ M_{\al,K^*L^*}^{min,\eps}}  \sqrt{  M_{\al,KL}^{up,\eps}} \eta_{\D} \ddKL p_\al, \\
 & p_{c,K^*L^*} = \tau_{K^*L^*} \ddKeLe (p_{\g}- p_{\w}).
	\end{aligned}
\end{equation} 
We choose the discrete mobilities (see \eqref{EpsilonMobilities}) as 
\begin{equation}
\begin{aligned}
    & M_{\al,AB}^{up,\eps} :=  \left\{
	\begin{array}{ll}
		M_{\al}^\eps(s_{\al,B}) & , \mbox{if } \ddAB p_\al  \ge 0  \\ 
		\\
		M_{\al}^\eps(s_{\al,A}) &, \mbox{otherwise} 
	\end{array}
	\right. \\
 &\text{and} \quad M_{\al,AB}^{min,\eps} :=  \min \left( M_{\al}^\eps(s_{\al,A}), M_{\al}^\eps(s_{\al,B}) \right).
\end{aligned}
\end{equation}
Moreover, we keep the relation \eqref{RelationPiUiModel} at the discrete level for all $A \in \T$ 
\begin{equation} (s_{\g,A},s_{\w,A}) = G(x_A,p_{\g,A},p_{\w,A})=  \left(\pc^{-1}(p_{\g,A} - p_{\w,A}), 1 - \pc^{-1}(p_{\g,A} - p_{\w,A}) \right). \end{equation}
One approximates the density of the $\al$-phase with an integral formula (see \cite{GhiQueSaa2020PositivityPreservingFiniteVolCompressibleTwoPhaseAni,KhaSaa2010SolutionsCompressible,SaaSaa2013StudyFullImplicitPetroleumFiniteVolCompressibleTwoPhase}). We have for all $A$, $B$ in $\overline{\M} \cup \overline{\M^*}$ 
\begin{equation} \label{rhoKL}
	\dfrac{1}{\rho_{\al,AB}} :=  \left\{
	\begin{array}{ll} \displaystyle
		\dfrac{1}{p_{\al,B} - p_{\al,A}} \int_{p_{\al,A}}^{p_{\al,B}} \dfrac{1}{\rho(z)} \diff z & \mbox{if } p_{\al,A} \neq p_{\al,B}  \\ 
		\\
		\dfrac{1}{\rho_{\al} (p_{\al,A})} & \mbox{otherwise} 
	\end{array}
	\right. .
\end{equation}

\subsubsection{Regularized PP-DDFV scheme}\label{SectionDDFVRegularizedChap3}

The objective is to demonstrate that the regularized scheme of the one studied in \cite{Crozon1} admits a solution. The continuity of all the terms makes \pointa and \pointb obvious. Let us show \pointc.

\begin{lem}[Maximum principle of the $0,\eta$-saturation]\label{maxprincEpDDFV} Let $p_{\g,\w,\T} =(p_{\g,\T},p_{\w,\T})$ be a solution to $F^{0,\eta}(p_\T,p_{\T}^n) =0$ with $\eta \ge 0$. Then, for $\al \in \{\g,\w \}$, the discrete saturation of the $\al$-phase obeys its physical bounds i.e., 
	\begin{equation} 0 \le s_{\al,A} \le 1, \quad \forall A \in \T. \end{equation}
\end{lem}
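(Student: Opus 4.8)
The plan is to argue by contradiction at a point where the discrete gas saturation attains its minimum, using the degeneracy of $M_\g$ — which has been \emph{extended} by $0$ on $(-\infty,0]$ — to annihilate every ``badly signed'' term of the DDFV flux. First I would reduce the statement: since $s_{\w,A}=1-s_{\g,A}$ for all $A\in\T$ by the discrete coupling \eqref{RelationPiUiModel}, the bound $0\le s_{\g,A}\le 1$ is equivalent to $s_{\g,A}\ge 0$ \emph{and} $s_{\w,A}\ge 0$, and these two lower bounds are obtained by the same argument applied to the $\g$- and $\w$-equations respectively (the sign $(-1)^{|\al|}$ in front of the $\eta$-capillary flux is exactly what makes the two cases symmetric). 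So it suffices to show that $m:=\min_{A\in\T}s_{\g,A}<0$ leads to a contradiction. The minimum cannot sit on $\partial\M_\Dir\cup\partial\M^*_\Dir$, where the homogeneous Dirichlet data and $\pc(0)=0$ give $s_\g=\pc^{-1}(0)=0>m$; hence it is attained at some free d.o.f. $A_0$ ($A_0\in\M$, $A_0\in\M^*\setminus\M^*_\Dir$, or $A_0\in\partial\M_\Neu$).

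Then I would evaluate $\F_{\g,A_0}^{0,\eta}(p_\T,p_\T^n)=0$ and check the sign of each term at $A_0$. (a) \emph{Accumulation}: $\Z(s_{\g,A_0})=\Z(m)=0$ and $s_{\g,A_0}^n\in[0,1]$ by the maximum principle at the previous step, so the accumulation term equals $-m_{A_0}\phi_{A_0}\rho_\g(p_{\g,A_0}^n)\,s_{\g,A_0}^n\le 0$, strictly negative iff $s_{\g,A_0}^n>0$. (b) \emph{The DDFV cross term} of $V_{A_0 L}^{\g,0}$, i.e. the $\eta_\D\,\ddKeLe p_\g$ piece whose sign is a priori undetermined, vanishes, because $M_{\g,A_0L}^{min,0}=\min\bigl(M_\g(m),M_\g(s_{\g,L})\bigr)=M_\g(m)=0$ by monotonicity of $M_\g$ and its extension. (c) \emph{The surviving upwind convection} $M_{\g,A_0L}^{up,0}\,\tau_{A_0L}\,\ddKL p_\g$ is $\ge 0$: if $\ddKL p_\g\ge 0$ it equals $M_\g(s_{\g,L})\,\tau_{A_0L}\,\ddKL p_\g\ge 0$; if $\ddKL p_\g<0$ the upwind mobility is $M_\g(m)=0$, so the term is $0$. (d) \emph{The $\eta$-capillary flux}: since $p_{\g,A}-p_{\w,A}=\pc(s_{\g,A})$ at every d.o.f., $p_{c,A_0L}=\tau_{A_0L}\bigl(\pc(s_{\g,L})-\pc(m)\bigr)\ge 0$ because $\pc$ is increasing and $m$ is the minimum, so (with $\eta\ge 0$) this contributes a nonpositive amount to $\F_{\g,A_0}^{0,\eta}$. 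As all $\rho_{\g,AB}$ and $\tau$'s are positive, $\F_{\g,A_0}^{0,\eta}=0$ reads $(\le 0)-\dt\,(\ge 0)-\dt\eta\,(\ge 0)=0$, whence each of the three contributions vanishes. (When $A_0\in\partial\M_\Neu$ there is no accumulation term, and the same inspection of the flux-only equation applies.)

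The main obstacle is the degenerate subcase. If $s_{\g,A_0}^n>0$ the accumulation term is \emph{strictly} negative and we are done at once. The delicate case is $s_{\g,A_0}^n=0$: then the accumulation term is $0$ too, and vanishing of the flux contributions merely says that all outgoing fluxes vanish. Here I would use $\eta>0$: the vanishing of $\sum_L\rho_{\g,A_0L}\,p_{c,A_0L}=0$ with each term $\ge 0$ forces $p_{c,A_0L}=0$, hence $\pc(s_{\g,L})=\pc(m)$, hence $s_{\g,L}=m$, at \emph{every} neighbour $L$ of $A_0$ (primal neighbours if $A_0\in\M$, dual ones if $A_0\in\M^*$). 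Thus $\{A\in\T:s_{\g,A}=m\}$ is nonempty and stable under the neighbour relation on the connected primal (resp. dual) mesh; propagating, it must eventually reach either a d.o.f. with strictly positive previous saturation — the strict contradiction above — or a Dirichlet d.o.f., where $s_\g=0>m$: contradiction in both cases. This propagation, resting on the added $\eta$-capillary flux, on connectedness and on the homogeneous Dirichlet data, is the crux.

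The plain case $\eta=0$ (no capillary flux to propagate along) is genuinely more delicate; it is, however, not needed for the existence theorem, since there $\eta$ is sent to $0$ only \emph{after} the maximum principle has been secured for $\eta>0$, and the closed constraint $0\le s\le 1$ passes to the limit. Finally, the upper bound $s_{\g,A}\le 1$ follows by the same computation on the $\w$-equations, using $s_\w=1-s_\g$, $\Z(s_{\w,A})=0$ when $s_{\w,A}<0$ (i.e. $s_{\g,A}>1$), and the sign flip $(-1)^{|\w|}=-1$ in the capillary term, which makes $p_{c,A_0L}\le 0$ at a maximizer of $s_\g$.
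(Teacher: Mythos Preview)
Your proof is correct and follows the same route as the paper: a pointwise sign analysis of each term of the $\g$-equation at the minimizer of $s_\g$, with the $\eta$-capillary contribution having the right sign because $\pc$ is increasing and the transmissibilities $\tau_{KL}$ are positive. You are more explicit than the paper (which multiplies by $(s_{\g,K})^-$ and defers the accumulation/convection signs to \cite{Crozon1}) about why the DDFV cross-term vanishes via $M_{\g,A_0L}^{min,0}=M_\g(m)=0$ and why upwinding makes the convection nonnegative, and you add a clean propagation argument along the $\eta$-capillary flux to close the degenerate subcase $s_{\g,A_0}^n=0$ when $\eta>0$; the $\eta=0$ case you leave open is precisely the original scheme's maximum principle, which the paper likewise delegates to the reference.
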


\begin{proof}[Proof]
The proof is the same as in the proof of Lemma in \cite{Crozon1}, and we will show how to handle the $\eta$-regularizing term. We take $\al = \g$, without loss of generality. We assume that for $n$ in $\llbracket 1, \Nf -1 \rrbracket$,  the property is true $(p_{\g,\T}^n, p_{\w,\T}^n)$, then we take $A \in \T$ such that $s_{\g,A} = \min_{B \in \T} s_{\g,B}$. We treat the case $A = K \in \M$, we treat the other cases likewise. One has  
	\begin{equation*}  
 \underbrace{ m_K \phi_K \left( \rho_{\g}(p_{\g,K}) \Z(s_{\g,K}) - \rho_{\g}(p_{\g,K}^{n}) s_{\g,K}^{n}   \right) (s_{\g,K})^-}_{= ACC_{K}^\g} - \dt CONV_K^\g - \eta \dt PC_K^\g = 0.
    \end{equation*}
    It is already established that  $ACC_{K}^\g \le 0$ and $CONV_K^\g \ge 0$ (see \cite{Crozon1}). Now, we look at the $\eta$ capillary pressure flow
\begin{equation*}
    \eta \dt PC_{K}^\g= \eta \dt 
\sum_{\sig = K|L \in \E_K} \rho_{\g,KL} \tau_{KL} \ddKL \pc \underbrace{{ (s_{\g,K})^-}}_{ \ge 0 }.
\end{equation*} 
Since $\pc$ is strictly increasing w.r.t $s_\g$, we have
\begin{equation*}
    \ddKL \pc (s_{\g,K})^- = \left( \pc (s_{\g,L}) - \pc (s_{\g,K}) \right) (s_{\g,K})^- \ge 0.
\end{equation*}
Then $ PC_{K}^\g \ge 0$. It implies that $s_{\g,A} \le 0$ for all $A \in \T$. 

If we reason on $\al = \w$, we have for $s_{\w,K} = \min_{B \in \T} s_{\w,B}$ 
\begin{equation*}  
 \underbrace{m_K \phi_K \left( \rho_{\w}(p_{\w,K}) \Z(s_{\w,K}) - \rho_{\w}(p_{\w,K}^{n}) s_{\w,K}^{n}   \right) (s_{\w,K})^-}_{= ACC_{K}^\w} - \dt CONV_K^\w + \eta \dt PC_K^\w = 0.
    \end{equation*}
Similarly, there holds $ACC_{K}^\w \le 0$ and $CONV_K^\w \ge 0$. We have for the $\eta$ term 
\begin{equation*}
    \eta \dt PC_{K}^\w= \eta \dt
\sum_{\sig = K|L \in \E_K} \rho_{\w,KL} \tau_{KL} \ddKL \pc \underbrace{{ (s_{\w,K})^-}}_{ \ge 0 }.
\end{equation*} 
Since $\pc$ is strictly increasing w.r.t $s_\g$, one deduces 
\begin{equation*}
    \ddKL \pc (s_{\w,K})^- = \left( \pc (1 - s_{\w,L}) - \pc (1 -s_{\w,K}) \right) (s_{\g,K})^- \le 0.
\end{equation*}
Finally $ PC_{K}^\w \le 0$. We conclude that $s_{\w,A} \ge 0$ for all $A \in \T$. Then, the item \pointc for the proposed regularized scheme is fulfilled.
\end{proof}

Let us now check the validity of the last hypothesis \pointd. This is the most demanding point in calculations. Let us set-up notations (see \eqref{NolinearFunctions} for $g_\al$)
\begin{equation*}
\begin{aligned}
        & g(p_{\g,\w,\T}) = ( g_\g(p_{\g,\T}), g_\w(p_{\w,\T})), \\
        & \F^{\eps,\eta}(p_{\g,\w,\T},p_{\g,\w,\T}^n) = ( F_\g^{\eps,\eta}(p_{\g,\w,\T},p_{\g,\w,}^n),F_\w^{\eps,\eta}(p_{\g,\w,\T},p_{\g,\w,\T}^n)) , 
\end{aligned}
\end{equation*}
where 
\begin{equation*}
\begin{aligned}
    g_\al(p_{\al,\T}) & = \left( (g_\al(p_{\al,K})_{K \in \overline{\M}_\Neu}, (g_\al(p_{\al,K^*})_{K^* \in \overline{\M^*}_\Neu} \right), \\
    \F_\al^{\eps,\eta}(p_{\g,\w,\T},p_{\g,\w,\T}^n) & = \left( (\F_{\al,K}^{\eps,\eta}(p_{\g,\w,\T},p_{\g,\w,\T}^n))_{K \in \overline{\M}_\Neu}, (\F_{\al,K^*}^{\eps,\eta}(p_{\g,\w,\T},p_{\g,\w,\T}^n))_{K^* \in \overline{\M^*}_\Neu} \right).
\end{aligned}
\end{equation*}
We compute
\begin{equation}\label{Gam1_2_3}
    \begin{aligned}
        \langle \F^{\eps,\eta}(p_{\g,\w,\T},p_{\g,\w,\T}^n) , g(p_{\g,\w,\T}) \rangle = \gamma_1 + \gamma_2 + \gamma_3,
    \end{aligned}
\end{equation}
where, treating the accumulation term as in \cite{KhaSaa2010SolutionsCompressible} and \cite{Crozon1}, using the function $\displaystyle \HH_{\al}(p_{\al})$, we have
\begin{equation}\label{CalculEstimGamma1}
    \begin{aligned}
        \gamma_1 & =  \sum_{\al \in \{\g, \w \}} \left( \sum_{K \in \M} m_K \phi_K (\rho_\al(p_{\al,K}) \Z(s_{\al,K}) - \rho_\al(p_{\al,K}^n) s_{\al,K}^n) g_\al(p_{\al,K}) \right. \\
        & \left.+ \sum_{K^* \in \overline{\M^*}_\Neu} m_{K^*} \phi_{K^*} (\rho_\al(p_{\al,{K^*}}) \Z(s_{\al,{K^*}}) - \rho_\al(p_{\al,{K^*}}^n) s_{\al,{K^*}}^n) g_\al(p_{\al,{K^*}}) \right) \\
        & \ge  \sum_{K \in \M} m_K \phi_K \sum_{\al \in \{\g, \w \}} (\HH_\al(p_{\al,K}) \Z(s_{\al,K}) - \HH_\al(p_{\al,K}^n) s_{\al,K}^n)  \\
        &  + \sum_{K \in \M} m_K \phi_K \pc(s_{\g,K})( \Z(s_{\g,K}) - s_{\g,K}^n)  \\
        & + \sum_{K^* \in \overline{\M^*}_\Neu} m_{K^*} \phi_{K^*}  \sum_{\al \in \{\g, \w \}} (\HH_\al(p_{\al,{K^*}}) \Z(s_{\al,{K^*}}) - \HH_\al(p_{\al,{K^*}}^n) s_{\al,{K^*}}^n)  . \\
        & + \sum_{K^* \in \overline{\M^*}_\Neu} m_{K^*} \phi_{K^*} \pc(s_{\g,K^*})( \Z(s_{\g,K^*}) - s_{\g,K^*}^n) .
    \end{aligned}
\end{equation}
It follows, using \eqref{IneqNorms}, that
\begin{equation}\label{EstimGamma1}
    \begin{aligned}
        \gamma_1 & \ge   \underbrace{2 \phiinf  \left( \sum_{\al \in \{ \g, \w \}} |\HH_\al(p_{\al,\T})\Z(s_{\al,\T})|_{1,\T} \right)}_{\ge 0} - \underbrace{2 \CtePhiSup \left( \sum_{\al \in \{ \g, \w \}} |\HH_\al(p_{\al,\T}^n)|_{1,\T} \right)}_{= C_n} \\
        & - 2 \phisup | p_{c,\T}|_{1,\T} \\
        & \ge  - C_n - 2 \phisup C \norme{p_{c,\T}}_{\T,\tau}= - C_n - C_{\gamma_1} \norme{p_{\g,\T} - p_{\w,\T}}_{\T,\tau}.
    \end{aligned}
\end{equation}
We point out that the constant $C_n \ge 0$. One treats $\gamma_2$ in the same fashion as in \cite{Crozon1}, appearing a constant $\nu >0$, depending on $\Laminf$, $\Lamsup$, and the fixed mesh such that
\begin{equation}\label{EstimGamma2_1}
    \begin{aligned}
        \gamma_2 & = - \dt \sum_{\al \in \{ \g,\w\}} \sum_{\D \in \DD} \rho_{\al,KL} V_{KL}^{\al,\eps} \ddKL g_\al(p_\al) + \rho_{\al,K^*L^*} V_{K^*L^*}^{\al,\eps} \ddKeLe g_\al(p_\al) \\
        & \ge \dt \nu \sum_{\al \in \{ \g,\w\}} \sum_{\D \in \DD} M_{\al,KL}^{up,\eps} \tau_{KL} (\ddKL p_{\al})^2 + M_{\al,K^*L^*}^{up,\eps} \tau_{K^*L^*} (\ddKeLe p_{\al})^2 \\
        & \ge \dt \nu \eps \sum_{\al \in \{ \g,\w\}} \sum_{\D \in \DD} \tau_{KL} (\ddKL p_{\al})^2 + \tau_{K^*L^*} (\ddKeLe p_{\al})^2 = \dt \nu \eps \sum_{\al \in \{ \g,\w\}} \norme{p_{\al,\T}}_{\T,\tau}^2.
    \end{aligned}
\end{equation}
We now make use of the following Lemma (coming from the works \cite{GhiQueSaa2020PositivityPreservingFiniteVolCompressibleTwoPhaseAni,EymHerMic2003MathematicalStudyPetroleumEngineeringScheme}).
\begin{lem}\label{lem1}
	For every $A,B$ in $\overline{\M} \cup \overline{\M^*}$, there holds: 
\begin{equation*} \label{IneqPressPhaseGlobCap} 
m_0  \left( \left( \ddAB p \right)^2 +  \left( \ddAB \xi\right)^2 \right) \le M_{\g,AB}^{up} \left( \ddAB p_\g \right)^2 +  M_{\w,AB}^{up}  \left(\ddAB p_\w \right)^2 . 
 \end{equation*}
\end{lem}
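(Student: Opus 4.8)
The plan is to fix the two control volumes $A$ and $B$, to reduce the claimed inequality to a one–parameter quadratic inequality in the global‑pressure difference, and to dispatch that by a short case split governed by the two upwinding switches. First I would record what the global pressure buys: from $p_\g = p + \hat{p}_\g(s_\g)$ and $p_\w = p - \hat{p}_\w(s_\g)$ one gets $\ddAB p_\g = \ddAB p + \ddAB\hat{p}_\g$ and $\ddAB p_\w = \ddAB p - \ddAB\hat{p}_\w$, where $\ddAB\hat{p}_\g := \hat{p}_\g(s_{\g,B}) - \hat{p}_\g(s_{\g,A})$, and similarly for $\hat{p}_\w$ and $\xi$. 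Setting $\lambda_\g(u) = M_\g(u)/M(u)$ and $\lambda_\w(u) = M_\w(1-u)/M(u)$ (so $\lambda_\g + \lambda_\w \equiv 1$, $0\le\lambda_\al\le 1$), the definitions give $\ddAB\hat{p}_\g = \int_{s_{\g,A}}^{s_{\g,B}} \lambda_\w(u)\,\pc'(u)\,\diff u$, $\ddAB\hat{p}_\w = \int_{s_{\g,A}}^{s_{\g,B}} \lambda_\g(u)\,\pc'(u)\,\diff u$, $\ddAB\xi = \int_{s_{\g,A}}^{s_{\g,B}} \sqrt{\lambda_\g(u)\lambda_\w(u)}\,\pc'(u)\,\diff u$. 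From these I extract four facts: (i) $\ddAB\hat{p}_\g + \ddAB\hat{p}_\w = \ddAB\pc$, all three having the sign of $\ddAB\pc$ and $|\ddAB\hat{p}_\g|,|\ddAB\hat{p}_\w|\le|\ddAB\pc|$; (ii) by Cauchy–Schwarz applied to the last three integrals, $(\ddAB\xi)^2 \le \ddAB\hat{p}_\g\,\ddAB\hat{p}_\w$; (iii)–(iv) since for $u$ between $s_{\g,A}$ and $s_{\g,B}$ one has $\lambda_\w(u)\le M_\w(1-s_{\g,A})/m_0$ and $\lambda_\g(u)\le M_\g(s_{\g,B})/m_0$ (monotonicity of the mobilities and $M\ge m_0$), so that when $\ddAB\pc\ge 0$ we obtain $m_0\,\ddAB\hat{p}_\g \le M_\w(1-s_{\g,A})\,\ddAB\pc$ and $m_0\,\ddAB\hat{p}_\w \le M_\g(s_{\g,B})\,\ddAB\pc$.

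Next I would prove the one genuinely structural ingredient, $M_{\g,AB}^{up} + M_{\w,AB}^{up}\ge m_0$. If the two upwind values are taken at the same cell the sum equals $M(s_{\g,A})$ or $M(s_{\g,B})$, hence $\ge m_0$. If they are taken at different cells, say $M_{\g,AB}^{up}=M_\g(s_{\g,B})$ (so $\ddAB p_\g\ge 0$) and $M_{\w,AB}^{up}=M_\w(1-s_{\g,A})$ (so $\ddAB p_\w<0$), then $\ddAB\pc=\ddAB p_\g-\ddAB p_\w>0$, hence $s_{\g,A}<s_{\g,B}$, and monotonicity of $M_\g$ gives $M_\g(s_{\g,B})+M_\w(1-s_{\g,A})\ge M_\g(s_{\g,A})+M_\w(1-s_{\g,A})=M(s_{\g,A})\ge m_0$; the mirror configuration is symmetric.

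Then I would reduce and split. Because $M_{\al,AB}^{up}(\ddAB p_\al)^2$ vanishes whenever $\ddAB p_\al = 0$, the inequality is unchanged under swapping $A$ and $B$, so I may assume $\ddAB\pc\ge 0$, i.e. $s_{\g,A}\le s_{\g,B}$ (hence $M_\g(s_{\g,A})\le M_\g(s_{\g,B})$ and $M_\w(1-s_{\g,A})\ge M_\w(1-s_{\g,B})$); the degenerate case $\ddAB\pc=0$ is immediate. With $A,B$ fixed, $\ddAB\pc$, $\ddAB\hat{p}_\g$, $\ddAB\hat{p}_\w$, $\ddAB\xi$ and the four mobility values are constants, so the only free quantity is $q:=\ddAB p_\g$, with $\ddAB p_\w=q-\ddAB\pc$ and $\ddAB p=q-\ddAB\hat{p}_\g$. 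The switch of $M_{\g,AB}^{up}$ occurs at $q=0$ and that of $M_{\w,AB}^{up}$ at $q=\ddAB\pc$, splitting $\R$ into $q\le 0$, $0\le q\le\ddAB\pc$, and $q\ge\ddAB\pc$. On each interval the assertion reads $f(q)\ge 0$ for a quadratic $f$ whose leading coefficient is $M_{\g,AB}^{up}+M_{\w,AB}^{up}-m_0\ge 0$, so $f$ is convex. On the two outer intervals I would use (iii)–(iv) to show that $f'$ has the correct sign at the interval's finite endpoint (so the vertex of $f$ lies on the far side) and that $f$ at that endpoint is $\ge 0$; this last point reduces, via (ii) together with (iii)/(iv), to $m_0(\ddAB\hat{p}_\g)^2 + m_0(\ddAB\xi)^2 \le M_\w(1-s_{\g,A})\,\ddAB\pc\,\ddAB\hat{p}_\g$ (or its $\g\leftrightarrow\w$ mirror). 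On the middle interval the vertex of $f$ sits inside $[0,\ddAB\pc]$, and completing the square reduces $f$ at the vertex being nonnegative, after inserting (ii), to the factorised inequality $\bigl(M_\w(1-s_{\g,A})-m_0\kappa\bigr)\bigl(M_\g(s_{\g,B})-m_0(1-\kappa)\bigr)\ge 0$ with $\kappa:=\ddAB\hat{p}_\g/\ddAB\pc\in[0,1]$, whose two factors are nonnegative by (iii)–(iv).

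I expect $M_{\g,AB}^{up}+M_{\w,AB}^{up}\ge m_0$ to be the only delicate point: a naive "$M\ge m_0$" does not apply since the two upwind values may live on different cells, and the resolution is exactly the observation that a mixed upwind choice forces a sign on $\ddAB\pc$, hence a monotonicity of $s_\g$. The three quadratic sub‑cases are routine but a little lengthy; I would keep them transparent by carrying the integral formulas of the first step and writing one display per sub‑case.
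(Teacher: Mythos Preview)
The paper does not supply its own proof of this lemma; it merely states it as ``coming from the works \cite{GhiQueSaa2020PositivityPreservingFiniteVolCompressibleTwoPhaseAni,EymHerMic2003MathematicalStudyPetroleumEngineeringScheme}'' and uses it as a black box, so there is nothing in the paper to compare your argument against.

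Your proposal is a correct, self-contained proof. The ingredients you isolate --- the Cauchy--Schwarz bound $(\ddAB\xi)^2 \le \ddAB\hat{p}_\g\,\ddAB\hat{p}_\w$, the monotonicity bounds $m_0\,\ddAB\hat{p}_\g \le M_\w(1-s_{\g,A})\,\ddAB\pc$ and its mirror, and the structural inequality $M_{\g,AB}^{up}+M_{\w,AB}^{up}\ge m_0$ --- are all valid, and you are right that the last is the only place where the upwind definition genuinely matters (the mixed-cell case being rescued by the forced sign of $\ddAB\pc$). The reduction to a piecewise convex quadratic in $q=\ddAB p_\g$, split at $q=0$ and $q=\ddAB\pc$, together with your endpoint and vertex checks on each piece, goes through as outlined; in particular the factorised middle-interval condition $(M_\w(1-s_{\g,A})-m_0\kappa)(M_\g(s_{\g,B})-m_0(1-\kappa))\ge 0$ with $\kappa=\ddAB\hat{p}_\g/\ddAB\pc$ is correct and both factors are nonnegative by (iii)--(iv).

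One small slip to fix when you write it up: for the outer intervals you say $f$ at the finite endpoint being nonnegative ``reduces to'' $m_0(\ddAB\hat{p}_\g)^2+m_0(\ddAB\xi)^2 \le M_\w(1-s_{\g,A})\,\ddAB\pc\,\ddAB\hat{p}_\g$. The right-hand side should be $M_\w(1-s_{\g,A})(\ddAB\pc)^2$ (that is literally what $f(0)\ge 0$ says); the chain is
\[
m_0(\ddAB\hat{p}_\g)^2+m_0(\ddAB\xi)^2 \;\overset{\text{(ii)}}{\le}\; m_0\,\ddAB\hat{p}_\g\,\ddAB\pc \;\overset{\text{(iii)}}{\le}\; M_\w(1-s_{\g,A})(\ddAB\pc)^2,
\]
using $\ddAB\hat{p}_\g+\ddAB\hat{p}_\w=\ddAB\pc$ in the first step. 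With that correction the argument is complete.
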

Then, we also deduce
\begin{equation}\label{EstimGamma2_2}
    \begin{aligned}
        \gamma_2  \ge \dt \nu & \left(  \sum_{\D \in \DD} \tau_{KL} (\ddKL p)^2  + \tau_{K^*L^*} (\ddKeLe p)^2 \right. \\
        & \left. + \sum_{\D \in \DD} \tau_{KL} (\ddKL \xi)^2 + \tau_{K^*L^*} (\ddKeLe \xi)^2 \right) \\
         =\dt \nu & \left( \norme{p_\T}_{\T,\tau}^2 + \norme{\xi_\T}_{\T,\tau}^2 \right) . \\
    \end{aligned}
\end{equation}
Finally, the discrete integration-by-parts gives us
\begin{equation}\label{EstimGamma3}
    \begin{aligned}
        \gamma_3 & = - \eta \dt \sum_{\al \in \{ \g, \w \}} (-1)^{|\al|}  \sum_{\D \in \DD} \rho_{\al,KL} p_{c,KL} \ddKL g_\al(p_\al) + \rho_{\al,K^*L^*} p_{c,K^*L^*} \ddKeLe g_\al(p_\al) \\
        & = - \eta \dt \sum_{\D \in \DD} p_{c,KL} \ddKL (p_\g - p_\w) + p_{c,K^*L^*} \ddKeLe (p_\g - p_\w) \\
        & = \eta \dt \sum_{\D \in \DD} \tau_{KL} (\ddKL \pc)^2 + \tau_{K^*L^*} (\ddKeLe \pc)^2 = \eta \dt \norme{p_{\g,\T} - p_{\w,\T}}_{\T,\tau}^2 .\\
    \end{aligned}
\end{equation}
Finally, we obtain using \eqref{Gam1_2_3}, \eqref{EstimGamma1}, \eqref{EstimGamma2_1}, \eqref{EstimGamma2_2}, \eqref{EstimGamma3} and the fact that since $p_{\T}$ is solution to $\F^{\eps,\eta}(p_{\g,\w,\T},p_{\g,\w,\T}^n) =0$, $\gamma_1 + \gamma_2 + \gamma_3 =0$, we have
\begin{equation} \label{EquationExistence}
    \begin{aligned}
        0 & = \langle \F^{\eps,\eta}(p_{\g,\w,\T},p_{\T}^n) , g(p_{\g,\w,\T}) \rangle \\
        &\ge - C_{\gamma_1} \norme{p_{\g,\T} - p_{\w,\T}}_{\T,\tau}  -C_n  +  \dt \nu \left( \norme{p_\T}_{\T,\tau}^2 + \norme{\xi_\T}_{\T,\tau}^2 \right) \\
        & + \dt \nu \eps \left( \sum_{\al \in \{ \g,\w\}} \norme{p_{\al,\T}}_{\T,\tau}^2 \right) + \eta \dt \norme{p_{\g,\T} - p_{\w,\T}}_{\T,\tau}^2.
    \end{aligned}
\end{equation}
As a result, $ C_{\gamma_1} \norme{p_{\g,\T} - p_{\w,\T}}_{\T,\tau} \le \dfrac{\eta \dt}{2} \norme{p_{\g,\T} - p_{\w,\T}}_{\T,\tau}^2 + \dfrac{C_{\gamma_1}^2}{2\eta \dt} $. Following
\begin{equation*}
    \begin{aligned}
        C_n + \dfrac{C_{\gamma_1}^2}{2\eta \dt} & \ge  \dt \nu \left( \norme{p_\T}_{\T,\tau}^2 + \norme{\xi_\T}_{\T,\tau}^2 \right)  + \dt \nu \eps \left( \sum_{\al \in \{ \g,\w\}} \norme{p_{\al,\T}}_{\T,\tau}^2 \right) + \\
        & \dfrac{\eta \dt}{2} \norme{p_{\g,\T} - p_{\w,\T}}_{\T,\tau}^2.
    \end{aligned}
\end{equation*}
The result is \pointd. 

For the last point, since \pointc holds true, taking over the computation \eqref{CalculEstimGamma1}-\eqref{EstimGamma1}, one obtains
\begin{equation*}
    \begin{aligned}
        \gamma_1 \ge  - C_n - 2 m_\Om \phisup \norme{p_{c}}_{[0,1],\infty}.
    \end{aligned}
\end{equation*}
Finally \pointe is satisfied 
\begin{equation*}
    \begin{aligned}
        C_n + 2 m_\Om \phisup \norme{p_{c}}_{[0,1],\infty} \ge  \dt \nu \left( \norme{p_\T}_{\T,\tau}^2 + \norme{\xi_\T}_{\T,\tau}^2 \right)  .
    \end{aligned}
\end{equation*}

\subsubsection{Existence}

Now, it only remains to prove the existence of a solution to the regularized PP-DDFV scheme for every $\eps > 0$ and $\eta >0$. We will use the important fixed point result \cite{Evans2022PDE}
\begin{lem}[Zeros of vector fields \cite{Evans2022PDE}]\label{lemzeros}
	Assume the continuous function $v: \R^n \rightarrow \R^n$ satisfies $ v(x) \cdot x \ge 0$, if $|x|=r$, for some $r>0$. Then there exists a point $x \in B(0,r)$ such that $v(x)=0$.
\end{lem}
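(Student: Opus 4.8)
The plan is to deduce the statement from Brouwer's fixed-point theorem via the classical contradiction argument (this is the proof given in \cite{Evans2022PDE}). Suppose, for contradiction, that $v(x)\neq 0$ for every $x$ in the closed ball $\overline{B}(0,r)$. Since $v$ is continuous and $\overline{B}(0,r)$ is compact, $|v|$ attains a strictly positive minimum there, so the map
\[
w:\overline{B}(0,r)\longrightarrow \overline{B}(0,r),\qquad w(x):=-\,r\,\frac{v(x)}{|v(x)|},
\]
is well defined and continuous, and in fact $|w(x)|=r$ for all $x$, i.e. $w$ takes its values in the sphere $\partial B(0,r)\subset\overline{B}(0,r)$. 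The set $\overline{B}(0,r)$ is nonempty, compact and convex, so Brouwer's theorem provides a fixed point $x_0\in\overline{B}(0,r)$ with $w(x_0)=x_0$.

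From $w(x_0)=x_0$ we get $|x_0|=|w(x_0)|=r$, so the hypothesis applies and yields $v(x_0)\cdot x_0\ge 0$. On the other hand, substituting $x_0=-\,r\,v(x_0)/|v(x_0)|$ gives
\[
v(x_0)\cdot x_0=-\frac{r}{|v(x_0)|}\,v(x_0)\cdot v(x_0)=-\,r\,|v(x_0)|<0,
\]
a contradiction. Hence $v$ must vanish at some point of $\overline{B}(0,r)$, which is the assertion.

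I expect the only non-elementary ingredient, and hence the main obstacle, to be Brouwer's fixed-point theorem itself; everything else is algebra. An equivalent route, avoiding the auxiliary self-map $w$, goes through the Brouwer degree: if moreover $v$ does not vanish on $\partial B(0,r)$, then the affine homotopy $h(t,x)=(1-t)v(x)+tx$ does not vanish on $\partial B(0,r)$ either — if $h(t,x)=0$ with $|x|=r$ then $(1-t)\,v(x)\cdot x=-t r^2$, and since the left-hand side is $\ge 0$ while the right-hand side is $\le 0$, both must vanish, forcing $t=0$ and then $v(x)=0$, which is excluded — so $\deg(v,B(0,r),0)=\deg(\mathrm{id},B(0,r),0)=1\neq 0$ and $v$ has a zero inside $B(0,r)$. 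A minor caveat: as literally stated with the \emph{open} ball the conclusion genuinely needs this degree version, since a zero may sit on $\partial B(0,r)$ (e.g. $n=1$, $v(x)=x^2-1$, $r=1$); the contradiction argument above delivers a zero in the closed ball $\overline{B}(0,r)$, which is precisely what is needed in the sequel — there the relevant vector field will be shown to be strictly coercive on the sphere of the chosen radius, hence nonzero on it, so a zero in the open ball follows as well.
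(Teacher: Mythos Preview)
Your proof is correct and is exactly the argument in the cited reference \cite{Evans2022PDE}. Note that the paper does not give its own proof of this lemma: it is stated as a known tool and attributed to Evans, then applied directly. Your observation about the open versus closed ball is accurate but inconsequential for the paper's purposes, since in the applications the radius $r$ is chosen so that $\langle v(x),x\rangle$ is strictly positive on the sphere.
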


For proof of the existence of an approximate solution of a similar model one can refer to \cite{GhiQueSaa2019PositiveControlVolFiniteElementDegenerateCompressibleAnistropic,SaaSaa2014NumericalAnalysisNonEquilibrium2phase2componentCompressible,BilalSaadThesis2011}.

	We write $ n = Card({ \overline{\M}}_\Neu)$, $m= Card({ \overline{\M^*}_\Neu})$. One defines the $C^1$-diffeopmorphism, verifying $\Phi(0) = 0$, 
 \begin{equation*}
     \fonction{\Phi}{\R^n \times \R^m \times \R^n \times \R^m}{\R^n \times \R^m \times \R^n \times \R^m}{(a, a^*, b, b^*)}{(g_\g(a),g_\g(a^*),g_\w(b),g_\w(b^*))} .
 \end{equation*}
 We write $\Phi^{-1}(u,u^*,w,w^*) = (p_\g,p_\g^*,p_\w,p_\w^*)$, then we apply Lemma \ref{lemzeros} to \\ $v(.) = \F^{\eps,\eta} (\Phi^{-1}(.), p_{\g,\w,\T}^n)$. We choose the norm on $\R^n \times \R^m \times \R^n \times \R^m$, given by 
 \begin{equation*}
     \norme{(u,u^*,w,w^*)}^2 = \norme{g_\g^{-1}(u,u^*)}_{\T,\tau}^2 + \norme{g_\w^{-1}(w,w^*)}_{\T,\tau}^2. 
 \end{equation*}
 Thanks, to \eqref{EquationExistence}, and using that $\sqrt{2(a^2 + b^2)} \ge a + b$ for all $a$, $b\ge 0$, we write
 \begin{equation*}
     \begin{aligned}
         \langle v(u,u^*,w,w^*), (u,u^*,w,w^*) \rangle \ge &  - C_{\gamma_1} \sqrt{2} \norme{(u,u^*,w,w^*)} - C_n \\ 
         & + \dt \nu \eps \norme{(u,u^*,w,w^*)}^2.
     \end{aligned}
 \end{equation*}
 Taking $r \ge 0$, such that $ \dt \nu \eps r^2 - C_{\gamma_1} \sqrt{2} r - C_n \ge 0$, one applies Lemma \ref{lemzeros}, then there exists $(u,u^*,w,w^*)$ in $B_{\norme{.}}(0,r)$ such that $v(u,u^*,w,w^*) = 0$, meaning that $\Phi^{-1}(u,u^*,w,w^*)$ verifies the equation of the regularized PP-DDFV scheme. Then it admits a solution.

\subsection{CVFE scheme for compressible two-phase flow}\label{CVFEApplication}

In this section, we propose a regularization for the Control Volume Finite Element (CVFE) scheme introduced in \cite{GhiQueSaa2020PositivityPreservingFiniteVolCompressibleTwoPhaseAni}. Then, we show the existence of a solution to this regularized version of the scheme, implying a solution for the original one. This subsection follows exactly the same structure as in subsection \ref{BPDDFVApplication}.

\subsubsection{CVFE settings}

This method is a vertex centered method. The mesh $\T$  is a conforming simplicial partition of the domain $\Om$ ( in the sense of the finite element see \cite{Ern2004TheoryFE}). 

In 2D it consists of  a triangular mesh, such that for two distinct elements $T$, $T'$, $T \cap T'$ can be either a common vertex, and edge or the emptyset. One denotes   $x_T$  barcyenter of $T$. The set of the vertices of the mesh is written $\VV$. For $x_K$ a vertex of $T$, we write $\VV_{KT}$ the  vertices of $T$ except $x_K$. The vertices of the mesh will be the  degrees of freedom.  Moreover, we build a dual mesh around these vertices of $\T$. For a node $x_K$ of $\VV$ one associates a unique control volume $K$.  Let $\T_K$ be the set of triangles sharing $x_K$ as vertex. Then, the fraction in the triangle $T$ of cell $K$ is given by the polygon $A_K^T \subset T$, whose vertices are $x_K$, $x_T$ and the two midpoint of the segments $[x_K,x_L]$ for $L \in \VV_{KT}$. Therefore,  the control volume associated to $x_K$ is defined by $\overline{A_K} = \cup_{T \in \T_K} \overline{A_K^T}$. We call $\A$ this dual mesh centered on the nodes of the initial mesh. Due to  Dirichlet boundary conditions, we are led to  distinguish $\VVD$ the vertices located on $\GamD$, from the others $\VVDc = \VV \backslash \VVDc$. For more details on the CVFE setting one can refer to \cite{GhiQueSaa2020PositivityPreservingFiniteVolCompressibleTwoPhaseAni,CanIbrSaa2017PositiveCVFEDegenerateKellerSegel,CanGui2016ConvergenceNonlinearEntropyCVFEAniDegeneratePrabolicEqu}.

\subsubsection{Discrete operators and functions}

Let  $V_\T$ be  the $\mathbb{P}_1$ finite elements function space on the mesh $\T$. It is composed of the continuous and piecewise affine functions per elements: 
\begin{equation*}
    V_\T = \left\{ f \in C^0(\overline{\Om}), \quad f_{|T} \in \mathbb{P}_1, \quad \forall T \in \T \right\}.
\end{equation*}
This function space is in $H^1(\Om)$. The shape function basis $(\varphi_A)_{A \in \VV}$ are the elements of $V_\T$ such that $\varphi_A(x_B) =1$ if $A=B$ and $\varphi_A(x_B)= 0$ otherwise. One has the following the relations  $\sum_{K \in \VV} \varphi_K = 1$ and $\sum_{K \in \VV} \nabla \varphi_K = 0$. One decomposes each element $f$ of $ V_\T$ and its gradient likewise
\begin{equation*}
    f = \sum_{K \in \VV} f_K \varphi_K, \quad \nabla f = \sum_{K \in \VV} f_K \nabla \varphi_K. 
\end{equation*}
Considering the space associated to the Dirichlet boundary conditions
\begin{equation*}
    V_\T^0 = \left\{ f \in \VV, \quad f(x_K) = 0 \in , \quad \forall K \in \VVD \right\}.
\end{equation*}
We have the natural semi-norm on $V_\T$
\begin{equation*}
    \norme{f}_{V_\T}^2 = \int_\Om \norme{\nabla f}_2^2 \diff x.
\end{equation*}
It became a norm on $V_\T^0$ because of the discrete Poincaré inequality \cite{Ern2004TheoryFE}. We also use the norm, defined for $f_\T$ in $V_\T$ by
\begin{equation*}
    |f_\T|_{1,\T} = \sum_{K \in \VV} m_{A_K} |f_K|.
\end{equation*}
Then, the discrete Poincaré's inequality holds true, i.e. there exist a constant $C>0$, such that, for all $f_\T $ in $V_\T$:
\begin{equation}\label{IneqNormsCVFE}
    |f_\T|_{1,\T} \le C \norme{f_\T}_{V_\T}.
\end{equation}

Like for the BP-DDFV scheme, we split the time interval into subintervals $[t^n,t^{n+1}[$ such that $0=t_0 < t^1< ...< t^\Nf= \tf$, and take $\dt = t^{n+1}-t^n$.

\subsubsection{Presentation of the CVFE scheme}

We introduce the implicit positivity preserving CVFE method (see \cite{GhiQueSaa2020PositivityPreservingFiniteVolCompressibleTwoPhaseAni}) \eqref{DebutSchemeCVFE}-\eqref{RelationSiPiCVFE}.  The old solution  $p_{\alpha,\w,\T}^n$ belongs to $V_\T^0$, and the deduced saturation is verifying the discrete maximum principle. We keep the notations $|g| = 0$, $|\w| = 1$. We are looking for $p_{\alpha ,\w,\T}$ in $V_\T^0$ solution to the regularized scheme, given by $\F_\alpha^{\eps,\eta}$ as follow, with $\eps \ge 0$, $\eta \ge 0$, for $K \in \VVDc$,
    \begin{equation}\label{DebutSchemeCVFE}
\begin{aligned}
    \F_{\al,K}^{\eps,\eta} (p_{\g,\w,\T}, p_{\g,\w,\T}^{n}) = & m_{A_K} \phi_K \left( \rho_{\al}(p_{\al,K}) \Z(s_{\al,K}) - \rho_{\al}(p_{\al,K}^{n}) s_{\al,K}^{n} \right) \\
    & -  \dt  \sum_{T \in \T_K} \sum_{L \in \VV_{KT}} \rho_{\al, KL} M_\al^{\eps}(s_{\al,KL}) \Lam_{KL}^T \ddKL p_\al \\
    & - \dt \eta (-1)^{|\al|} \sum_{T \in \T_K} \sum_{L \in \VV_{KT}^+} \rho_{\al, KL} |\Lam_{KL}^T| \ddKL \pc.
\end{aligned}
\end{equation}

\begin{rem}
The Dirichlet boundary conditions are fixed by choosing to search solutions in $V_\T^0$. If we are looking for a solution in $V_\T$, then we have to add more equations in $\F^{\eps,\eta}$, one for every $K \in \partial \VVD$, such that
    \begin{equation}
        \begin{aligned}
            &\F_{\al,K}^{\eps,\eta} (p_{\g,\w,\T}, p_{\g,\w,\T}^{n}) = 0. \\
        \end{aligned}
    \end{equation}
\end{rem}
The transmissibility or stiffness coefficients between two neighboring control volumes $A_K$ and $A_L$ in the element $T$ are given by 
\begin{equation}
    \begin{aligned}
        \Lam_{KL}^T = - \int_T \Lam(x) \nabla \varphi_K \cdot \nabla \varphi_L \diff x = \Lam_{LK}^T.
    \end{aligned}
\end{equation}
One sets $\VV_{KT}^+$ the vertices of $T$ except $K$ such that the stiffness coefficient at the interface $\sig_{KL}^T$, $\Lam_{KL}^T$ is non-negative.

We approximate the porosity by its mean value on the control volume $A_K$ as in \eqref{MeanPorosity}. $\Z$ is defined in \eqref{FonctionZChap3}. The approximate density is still given by \eqref{rhoKL}. The saturation of the $\al$-phase on the interface $\sig_{KL}^T$ is chosen in a nonstandard way when the transmissibilities are non-negative 
\begin{equation}
	s_{\al,KL} :=  \left\{
	\begin{array}{ll}
		\left\{ \begin{array}{ll}
		s_{\al,L}  & , \quad \mbox{if } \quad \ddKL p_\al  \ge 0  \\ 
		s_{\al,K}  & , \quad \mbox{if} \quad  \ddKL p_\al < 0
	\end{array} \right. &, \quad \mbox{if} \quad \Lam_{KL}^T \ge 0  \\ 
		\quad \min_{J \in \VV_T} (s_{\al,J}) &, \quad \mbox{if } \quad \Lam_{KL}^T < 0
	\end{array}. \right. 
\end{equation}
Moreover, we keep the relation \eqref{RelationPiUiModel} at the discrete level for all $K \in \VV$ 
\begin{equation}\label{RelationSiPiCVFE} (s_{\g,K},s_{\w,K}) = G(x_K,p_{\g,K},p_{\w,K})=  \left(\pc^{-1}(p_{\g,K} - p_{\w,K}), 1 - \pc^{-1}(p_{\g,K} - p_{\w,K}) \right). \end{equation}

\subsubsection{Regularized CVFE scheme}\label{CVFERegularizedChap3}

Next, we show that \eqref{DebutSchemeCVFE}-\eqref{RelationSiPiCVFE} is a regularized scheme of the one studied in \cite{GhiQueSaa2020PositivityPreservingFiniteVolCompressibleTwoPhaseAni}. To begin with, \pointa and \pointb are obviously true by continuity of all the terms. Let us now  show \pointc

\begin{lem}[Maximum principle of the $0,\eta$-saturation]\label{maxprincEpCVFE} Let $p_{\alpha,\w,\T} =(p_{\g,\T},p_{\w,\T})$ be a solution to $\F_\alpha^{0,\eta}(p_\T,p_{\T}^n) =0$ with $\eta \ge 0$. Then, for $\al \in \{\g,\w \}$, the discrete saturation of the $\al$-phase obeys its physical ranges i.e. 
	\begin{equation} 0 \le s_{\al,A} \le 1, \quad \forall A \in \T. \end{equation}
\end{lem}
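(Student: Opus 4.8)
The plan is to reproduce, almost verbatim, the maximum–principle argument of \cite{GhiQueSaa2020PositivityPreservingFiniteVolCompressibleTwoPhaseAni} (which treats the case $\eta=0$) and of Lemma~\ref{maxprincEpDDFV}, and to check only that the additional $\eta$-capillary flow keeps the right sign. I would argue by induction on the time level: assuming $0\le s_{\al,K}^n\le 1$ for every $K\in\VV$ and every $\al\in\{\g,\w\}$, I prove the same bounds for the new iterate. Since $s_{\w,K}=1-s_{\g,K}$ at every node by \eqref{RelationSiPiCVFE}, it suffices to establish the two lower bounds $s_{\g,K}\ge 0$ and $s_{\w,K}\ge 0$, the upper bounds being then automatic. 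On the Dirichlet nodes $K\in\VVD$ one has $p_{\g,K}=p_{\w,K}=0$, hence $s_{\g,K}=\pc^{-1}(0)=0$; so only the nodes of $\VVDc$ have to be controlled, and there the full equation \eqref{DebutSchemeCVFE} is available.

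Fix $\al=\g$ (the case $\al=\w$ being symmetric) and let $K$ realise $\min_{J\in\VV}s_{\g,J}$. If $s_{\g,K}\ge 0$ there is nothing to prove, so assume $s_{\g,K}<0$, which forces $K\in\VVDc$; I then test $\F_{\g,K}^{0,\eta}(p_\T,p_\T^n)=0$ against $(s_{\g,K})^->0$ and write the resulting identity as
\begin{equation*}
ACC_K^\g-\dt\,CONV_K^\g-\dt\,\eta\,PC_K^\g=0 .
\end{equation*}
Because $\Z(s_{\g,K})=0$ and $\rho_\g(p_{\g,K}^n)s_{\g,K}^n\ge 0$ by the induction hypothesis, $ACC_K^\g=-\,m_{A_K}\phi_K\rho_\g(p_{\g,K}^n)s_{\g,K}^n(s_{\g,K})^-\le 0$. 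For $CONV_K^\g$ I split the double sum over $T\in\T_K$, $L\in\VV_{KT}$ according to the sign of $\Lam_{KL}^T$: if $\Lam_{KL}^T\ge 0$, then either $\ddKL p_\g\ge 0$ and the upwind value $s_{\g,KL}=s_{\g,L}$ makes $\rho_{\g,KL}M_\g(s_{\g,L})\Lam_{KL}^T\ddKL p_\g\ge 0$, or $\ddKL p_\g<0$ and $s_{\g,KL}=s_{\g,K}<0$, so $M_\g(s_{\g,KL})=0$ by the extension $M_\g(u)=0$ for $u\le 0$; if $\Lam_{KL}^T<0$, the chosen value is $\min_{J\in\VV_T}(s_{\g,J})\le s_{\g,K}<0$, so again $M_\g(\cdot)=0$. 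Hence every summand is $\ge 0$ and $CONV_K^\g\ge 0$. For $PC_K^\g$ only $L\in\VV_{KT}^+$ contribute, with $|\Lam_{KL}^T|=\Lam_{KL}^T\ge 0$, and $\ddKL\pc=\pc(s_{\g,L})-\pc(s_{\g,K})\ge 0$ since $\pc$ is increasing and $s_{\g,K}$ is the global minimum; with $\rho_{\g,KL}>0$ and $(s_{\g,K})^->0$ this gives $PC_K^\g\ge 0$. Plugging the three estimates into the identity yields $0=ACC_K^\g-\dt(CONV_K^\g+\eta PC_K^\g)$, a sum of two non-positive terms, so both vanish; $ACC_K^\g=0$ then forces $(s_{\g,K})^-=0$, contradicting $s_{\g,K}<0$. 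Therefore $s_{\g,K}\ge 0$ for all $K$.

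For $\al=\w$ I argue at a node $K$ minimising $s_{\w,\cdot}$. The only changes are that $(-1)^{|\w|}=-1$ flips the sign of the $\eta$-term in the identity, and that $s_{\w,K}$ being minimal means $s_{\g,K}=1-s_{\w,K}$ is maximal, so now $\ddKL\pc=\pc(s_{\g,L})-\pc(s_{\g,K})\le 0$; these two sign reversals compensate, and one again obtains a sum of non-positive terms equal to zero, whence $(s_{\w,K})^-=0$. Combining with the gas bound gives $0\le s_{\al,A}\le 1$ for all $A$ and $\al$, which closes the induction.

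\textbf{Main obstacle.} The only genuine difficulty, absent from the DDFV setting where $\tau_{KL}>0$, is the contribution of the anisotropy-induced \emph{negative} transmissibilities $\Lam_{KL}^T<0$: a naive upwinding would not produce a signed convection term, and it is precisely the nonstandard choice $s_{\al,KL}=\min_{J\in\VV_T}(s_{\al,J})$ on such interfaces, together with restricting the $\eta$-capillary flow to $\VV_{KT}^+$ with coefficient $|\Lam_{KL}^T|$, that annihilates the offending terms (the mobility being evaluated at a saturation $\le s_{\al,K}<0$, hence zero). A minor secondary subtlety is the degenerate situation $s_{\al,K}^n=0$, in which $ACC_K^\al$ vanishes on its own; there one must invoke the finer argument of \cite{GhiQueSaa2020PositivityPreservingFiniteVolCompressibleTwoPhaseAni}, combining the vanishing of $CONV_K^\al$ and $PC_K^\al$ with the strict monotonicity of $\pc$ and the positivity of $\rho_{\al,KL}$ and of $\Lam_{KL}^T$ on $\VV_{KT}^+$ to rule out a negative minimum.
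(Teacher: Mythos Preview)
Your proposal is correct and follows essentially the same route as the paper: test the $K$-equation at a node realising the minimal saturation against $(s_{\al,K})^-$, defer the sign analysis of $ACC_K^\al$ and $CONV_K^\al$ to \cite{GhiQueSaa2020PositivityPreservingFiniteVolCompressibleTwoPhaseAni}, and check that the new $\eta$-capillary contribution has the favourable sign thanks to the monotonicity of $\pc$ and the restriction to $\VV_{KT}^+$. You are in fact somewhat more careful than the paper (you take the minimum over all of $\VV$ rather than $\VVDc$, and you flag the degenerate case $s_{\al,K}^n=0$), but the argument and its structure are the same.
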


\begin{proof}[Proof]
The proof is the same as in the proof of Lemma 4.1 in \cite{GhiQueSaa2020PositivityPreservingFiniteVolCompressibleTwoPhaseAni}, we just look at the $\eta$-regularizing term. We take $\al = \g$, without loss of generality. We assume that for $n$ in $\llbracket 1, \Nf -1 \rrbracket$, the property is true for $(p_{\g,\T}^n, p_{\w,\T}^n)$. Then, we take $K \in \VVDc$ such that $s_{\g,K} = \min_{L \in \VVDc} s_{\g,L}$. We treat the case $ K \in \VVDc$, since we directly have the property for $K \in \VVD$ thanks to the Dirichlet boundary conditions and \eqref{RelationSiPiCVFE}. One has
\begin{equation*}  
 \underbrace{ m_{A_K} \phi_K \left( \rho_{\g}(p_{\g,K}) \Z(s_{\g,K}) - \rho_{\g}(p_{\g,K}^{n}) s_{\g,K}^{n}   \right) (s_{\g,K})^-}_{= ACC_{K}^\g} - \dt CONV_K^\g - \eta \dt PC_K^\g = 0.
    \end{equation*}
It is already demonstrated that  $ACC_{K}^\g \le 0$ and $CONV_K^\g \ge 0$ (see \cite{GhiQueSaa2020PositivityPreservingFiniteVolCompressibleTwoPhaseAni}). Now, we look a the $\eta$-capillary pressure flow
\begin{equation*}
    \eta \dt PC_{K}^\g= \dt \eta \sum_{T \in \T_K} \sum_{L \in \VV_{KT}^+} \rho_{\al, KL} |\Lam_{KL}^T| \ddKL \pc \underbrace{{ (s_{\g,K})^-}}_{ \ge 0 }.
\end{equation*} 
Because $\pc$ is strictly increasing w.r.t $s_\g$, we deduce
\begin{equation*}
    \ddKL \pc (s_{\g,K})^- = \left( \pc (s_{\g,L}) - \pc (s_{\g,K}) \right) (s_{\g,K})^- \ge 0.
\end{equation*}
Then, $ PC_{K}^\g \ge 0$. It implies that $s_{\g,K} \le 0$ for all $K \in \VV$.  Furthermore, we proceed similarly in the case $\al = \w$, and prove as in the proof of Lemma \ref{maxprincEpDDFV}, $ \min_{B \in \T} s_{\w,B} \ge 0$. Because of the relation between the saturations, we are able to conclude. Thus, one has \pointc for the proposed regularized CVFE scheme.
\end{proof}

It remains to prove \pointd. Using the nonlinear function \eqref{NolinearFunctions}, we set 
\begin{equation*}
\begin{aligned}
        g(p_{\g,\w,\T}) & = \left( (g_\g(p_{\g,\T}))_{K \in \VVDc}, (g_\w(p_{\w,\T}))_{K \in \VVDc} \right), \\ 
        \F^{\eps,\eta}(p_{\g,\w,\T},p_{\g,\w,\T}^n) & = \left( (\F_\g^{\eps,\eta}(p_{\g,\w,\T},p_{\g,\w,\T}^n))_{K \in \VVDc}, (\F_\w^{\eps,\eta}(p_{\g,\w,\T},p_{\g,\w,\T}^n))_{K \in \VVDc} \right).  
\end{aligned}
\end{equation*}
We compute
\begin{equation}\label{Gam1_2_3CVFE}
    \begin{aligned}
        \langle \F^{\eps,\eta}(p_{\g,\w,\T},p_{\g,\w,\T}^n) , g(p_{\g,\w,\T}) \rangle = \gamma_1 + \gamma_2 + \gamma_3,
    \end{aligned}
\end{equation}
where, treating the accumulation term $\gamma_1$ as in \cite{KhaSaa2010SolutionsCompressible,GhiQueSaa2020PositivityPreservingFiniteVolCompressibleTwoPhaseAni}, using $\HH_{\al}$, we obtain as in \eqref{EstimGamma1}
\begin{equation*}\label{CalculEstimGamma1CVFE}
    \begin{aligned}
        \gamma_1 = & \sum_{\al \in \{\g, \w \}} \left( \sum_{K \in \VVDc} m_{A_K} \phi_K (\rho_\al(p_{\al,K}) \Z(s_{\al,K}) - \rho_\al(p_{\al,K}^n) s_{\al,K}^n) g_\al(p_{\al,K}) \right) \\
        \ge &  \sum_{K \in \VVDc} m_{A_K} \phi_K  \sum_{\al \in \{\g, \w \}} (\HH_\al(p_{\al,K}) \Z(s_{\al,K}) - \HH_\al(p_{\al,K}^n) s_{\al,K}^n) \\
        & + \sum_{K \in \VVDc} m_{A_K} \phi_K \pc(s_{\g,K})( \Z(s_{\g,K}) - s_{\g,K}^n).
    \end{aligned}
\end{equation*}
We still estimate $\gamma_1$ as in the previous subsection, using \eqref{IneqNormsCVFE},
\begin{equation}\label{EstimGamma1CVFE}
    \begin{aligned}
        \gamma_1 \ge & \underbrace{\phiinf \left( \sum_{\al \in \{ \g, \w \}} |\HH_\al(p_{\al,\T}) \Z(s_{\al,\T})|_{1,\T} \right)}_{\ge 0} - \underbrace{ \CtePhiSup \left( \sum_{\al \in \{ \g, \w \}} |\HH_\al(p_{\al,\T}^n)|_{1,\T} \right)}_{= C_n} \\
        & - \phisup | p_{c,\T} |_{1,\T} \\
        \ge & - C_n - \phisup C \norme{p_{c,\T}}_{V_\T} =  - C_n - C_{\gamma_1}  \norme{p_{\g,\T} - p_{\w,\T}}_{V_\T}.
    \end{aligned}
\end{equation}
We point out that the constant $C_n$ is positive. One can deal with   $\gamma_2$, in the same fashion as in Proposition 4.1 (see \cite{GhiQueSaa2020PositivityPreservingFiniteVolCompressibleTwoPhaseAni})
\begin{equation}\label{EstimGamma2_1CVFE}
    \begin{aligned}
        \gamma_2 & = - \dt \sum_{\al \in \{\g,\w\}} \sum_{K \in \VVDc} \sum_{T \in \T_K} \sum_{L \in \VV_{KT}} \rho_{\al, KL} M_\al^{\eps}(s_{\al,KL}) \Lam_{KL}^T \ddKL p_\al g_\al(p_{\al,K})  \\
        & = - \dt \sum_{\al \in \{\g,\w\}} \sum_{K \in \VV} \sum_{T \in \T_K} \sum_{L \in \VV_{KT}} \rho_{\al, KL} M_\al^{\eps}(s_{\al,KL}) \Lam_{KL}^T \ddKL p_\al g_\al(p_{\al,K})  \\
        & = \dt \sum_{\al \in \{\g,\w\}} \sum_{T \in \T} \sum_{\sig_{KL}^T \in \E_T} \rho_{\al, KL} M_\al^{\eps}(s_{\al,KL}) \Lam_{KL}^T \ddKL p_\al \ddKL g_\al(p_{\al}) \\
        & = \dt \sum_{\al \in \{\g,\w\}} \sum_{T \in \T} \sum_{\sig_{KL}^T \in \E_T} M_\al^{\eps}(s_{\al,KL}) \Lam_{KL}^T (\ddKL p_\al)^2 \ge \dt \eps \Laminf \sum_{\al \in \{\g,\w\}} \norme{p_{\al,\T}}_{V_\T}^2. \\
    \end{aligned}
\end{equation}
A constant $\nu >0$ appears \cite{GhiQueSaa2020PositivityPreservingFiniteVolCompressibleTwoPhaseAni}, depending on the fixed mesh and on the permeability bounds $\Laminf$, $\Lamsup$. But, we also have thanks to Lemma 4.2 and 4.3 (see \cite{GhiQueSaa2020PositivityPreservingFiniteVolCompressibleTwoPhaseAni}), where $\nu$ is the constant given in Lemma 4.3, that 
\begin{equation}\label{EstimGamma2_2CVFE}
    \begin{aligned}
        \gamma_2 & \ge \dt m_0 \nu \sum_{T \in \T} \sum_{\sig_{KL}^T \in \E_T} \Lam_{KL}^T \left( (\ddKL p)^2 + (\ddKL \xi)^2 \right) \\
        & \ge \dt m_0 \nu \Laminf \left( \norme{p_{\T}}_{V_\T}^2 + \norme{\xi_{\T}}_{V_\T}^2 \right). \\
    \end{aligned}
\end{equation}
Finally, some  computations give us
\begin{equation}\label{EstimGamma3CVFE}
    \begin{aligned}
        \gamma_3 & = - \eta \dt \sum_{\al \in \{ \g,\w\}} (-1)^{|\al|} \sum_{K \in \VVDc} \sum_{T \in \T_K} \sum_{L \in \VV_{KT}^+} \rho_{\al, KL} |\Lam_{KL}^T| \ddKL \pc g_\al (p_{\al,K}) \\
         & = \eta \dt \sum_{\al \in \{ \g,\w\}} (-1)^{|\al|} \sum_{T \in \T} \sum_{ \sig_{KL}^T \in \E_T^+} \rho_{\al, KL} |\Lam_{KL}^T| \ddKL \pc  \ddKL g_\al (p_{\al}) \\
         & = \eta \dt \sum_{T \in \T} \sum_{ \sig_{KL}^T \in \E_T^+} |\Lam_{KL}^T| \ddKL \pc \left( \ddKL p_{\g} - \ddKL p_{\w}\right) \\
         & = \eta \dt \sum_{T \in \T} \sum_{ \sig_{KL}^T \in \E_T^+} |\Lam_{KL}^T| (\ddKL \pc)^2 \\
         & \ge \eta \dt \sum_{T \in \T} \sum_{ \sig_{KL}^T \in \E_T} \Lam_{KL}^T (\ddKL \pc)^2 \ge \eta \dt \Laminf \norme{p_{c,\T}}_{V_\T}^2  .\\
    \end{aligned}
\end{equation}
Thus, using \eqref{Gam1_2_3CVFE}, \eqref{EstimGamma1CVFE}, \eqref{EstimGamma2_1CVFE}, \eqref{EstimGamma2_2CVFE}, \eqref{EstimGamma3CVFE} and the fact that since $p_{\g,\w,\T}$ is solution to $\F^{\eps,\eta}(p_{\g,\w,\T},p_{\g,\w,\T}^n) =0$, $\gamma_1 + \gamma_2 + \gamma_3 =0$, we obtain
\begin{equation} \label{EquationExistenceCVFE}
    \begin{aligned}
        0 = & \langle \F^{\eps,\eta}(p_{\g,\w,\T},p_{\g,\w,\T}^n) , g(p_{\g,\w,\T}) \rangle \\ 
        \ge & - C_{\gamma_1}  \norme{p_{\g,\T} - p_{\w,\T}}_{V_\T} - C_n +  \dt \nu \Laminf \left( \norme{p_\T}_{V_\T}^2 + \norme{\xi_\T}_{V_\T}^2 \right) \\
        & + \dt \nu \eps \Laminf \sum_{\al \in \{ \g,\w\}} \norme{p_{\al,\T}}_{V_\T}^2 + \eta \dt \Laminf \norme{p_{\g,\T} - p_{\w,\T}}_{V_\T}^2 .
    \end{aligned}
\end{equation}
One claims $ C_{\gamma_1} \norme{p_{\g,\T} - p_{\w,\T}}_{V_\T} \le \dfrac{\eta \dt \Laminf}{2} \norme{p_{\g,\T} - p_{\w,\T}}_{V_\T}^2 + \dfrac{C_{\gamma_1}^2}{2\eta \dt \Laminf} $. Consequently, there holds 
\begin{equation*}
    \begin{aligned}
        C_n + \dfrac{C_{\gamma_1}^2}{2\eta \dt \Laminf} \ge & \dt \nu \Laminf \left( \norme{p_\T}_{V_\T}^2 + \norme{\xi_\T}_{V_\T}^2 \right)  + \dt \nu \eps \Laminf \left( \sum_{\al \in \{ \g,\w\}} \norme{p_{\al,\T}}_{V_\T}^2 \right) \\
        & + \dfrac{\eta \dt \Laminf}{2} \norme{p_{\g,\T} - p_{\w,\T}}_{V_\T}^2.
    \end{aligned}
\end{equation*}
As a result, \pointd is satisfied. 

For the last point, according to the item\pointc, and taking over the computation \eqref{CalculEstimGamma1CVFE}-\eqref{EstimGamma1CVFE}, one obtains
\begin{equation*}
    \begin{aligned}
        \gamma_1 \ge  - C_n - m_\Om \phisup \norme{p_{c}}_{[0,1],\infty}.
    \end{aligned}
\end{equation*}
It follows \pointe
\begin{equation*}
    \begin{aligned}
        C_n +  m_\Om \phisup \norme{p_{c}}_{[0,1],\infty} \ge  \dt \nu \Laminf \left( \norme{p_\T}_{V_\T}^2 + \norme{\xi_\T}_{V_\T}^2 \right)  .
    \end{aligned}
\end{equation*}

\subsubsection{Existence}

Now, it only remains to prove the existence of a solution to the regularized CVFE scheme, for every $\eps>0$ and $\eta >0$. We write $ n = Card( \VVDc)$. One defines the $C^1$-diffeopmorphism, verifying $\Phi(0) = 0$, 
 \begin{equation*}
     \fonction{\Phi}{\R^n \times  \R^n }{\R^n \times \R^n}{(a, b)}{(g_\g(a),g_\w(b))} .
 \end{equation*}
 We write $\Phi^{-1}(u,w) = (p_\g,p_\w)$, then we apply Lemma \ref{lemzeros} to \\
 $v(.) = \F^{\eps,\eta} (\Phi^{-1}(.), p_{\g,\w,\T}^n)$. We choose the norm on $\R^n \times \R^n$ given by 
 \begin{equation*}
     \norme{(u,w)}^2 = \norme{g_\g^{-1}(u)}_{V_\T}^2 + \norme{g_\w^{-1}(w)}_{V_\T}^2. 
 \end{equation*}
 Thanks to \eqref{EquationExistenceCVFE}, and using that $\sqrt{2(a^2 + b^2)} \ge a + b$ for all $a$, $b\ge 0$, lead to 
 \begin{equation*}
     \begin{aligned}
         \langle v(u,w), (u,w) \rangle \ge  - C_{\gamma_1} \sqrt{2} \norme{(u,w)} - C_n + \dt \nu \eps \norme{(u,w)}^2.
     \end{aligned}
 \end{equation*}
 Taking the radius $r \ge 0$, such that $ \dt \nu \eps \Laminf r^2 - C_{\gamma_1} \sqrt{2} r - C_n \ge 0$, one applies Lemma \ref{lemzeros}, then there exists $(u,w)$ in $B_{\norme{.}}(0,r)$ such that $v(u,w) = 0$, meaning that $\Phi^{-1}(u,w) = (p_\g,p_\w)$ is a solution to the regularized CVFE scheme, which finishes the proof.

\section{Conclusions}

In this paper, we propose a framework to prove rigorously the existence of solutions to some numerical schemes sharing some structural properties of stability. We try to catch a large variety of continuous models, which encompass the two-phase Darcy flow in porous media model. We aim for Euler implicit time-discretization, but it can be applied to other types of schemes. A few key assumptions need to be fulfilled to use the result: a formal relationship between the unknowns identical to the one of the continuous model, a maximum principle, and lastly, energy estimates. The key idea is to build regularized versions of the considered schemes so that proving the existence of solutions to the schemes in question is simpler. Then, it will imply a solution to the original numerical scheme first studied.

We illustrate the use of this tool in the case of the two-phase Darcy flow. It enables us to handle the degeneracy, which was an issue to show the existence. First, we treat a Positivity-Preserving DDFV scheme. In a second time, the method is applied to a CVFE scheme. The idea of both regularizations is, on the one hand, to $\epsilon$-perturb mobilities for removing the degeneracy impact, and on the other hand, to add a capillary pressure flow with positive coefficients. These applications illustrate the strength of the proposed approach and demonstrate its broad applicability and potential for generalization across diverse contexts.


\bigskip
\noindent\textbf{Acknowledgment}: the authors would like to thank the FMPL and the Ecole Centrale Nantes for supporting this work. 



\bibliographystyle{abbrv}
\bibliography{biblio}

\end{document}